\def \N {{\mathbb N}}
\def \R {{\mathbb R}}
\def \e {{\mathbf e}}
\newtheorem{theorem}{Theorem}[section]
\newtheorem{cor}[theorem]{Corollary}
\newtheorem{lemma}[theorem]{Lemma}
\newtheorem{ex}[theorem]{Example}
\newtheorem{obs}[theorem]{Observation}
\newtheorem{note}[theorem]{Note}
\title{Binary strings of length $n$ with $x$ zeros and longest $k$-runs of 
zeros}
\author {Monimala Nej, A. Satyanarayana Reddy\\
Department of 
Mathematics, Shiv Nadar 
University, India-201314\\ (e-mail: 
monimalanej@gmail.com, satyanarayana.reddy@snu.edu.in).
  }
\date{}
\begin{document}
\maketitle
\begin{abstract}
In this paper, we study $F_{n}(x,k)$, the number of binary strings of length $n$ 
containing $x$ 
zeros and a longest subword of $k$ zeros. A recurrence 
relation for $F_{n}(x,k)$ is derived. We expressed few known numbers like 
Fibonacci, triangular, number of binary strings of length $n$ without $r$-runs 
of ones and number of compositions of $n+1$ with largest summand $k+1$ in terms of $F_{n}(x,k).$ Similar results and applications were obtained  for \^F$_{n}(x,k),$ the number of all palindromic binary strings of length $n$ containing $x$ zeros and longest $k$-runs of zeros.
\end{abstract}
{\bf{Key Words}}: Binary strings, palindromic binary strings, $k$-runs of 
zeros, partition of an integer, composition and  palindromic composition of an integer.\\
{\bf{AMS(2010)}}: 05A10, 05A15, 05A19,11B39

\section{Introduction} Let $B_n$ denote the set of all binary strings of 
length $n$ and  $B_n^{x,k}$ denote the 
set of all binary strings of length $n$ with $x$ zeros and having at least 
one  longest subword of zeros of length $k.$ For example, 
$$B_6^{4,2}=\{100100,010100,010010,001100,001010,001001\}.$$ 
Consequently, a  necessary condition for  $B_n^{x,k}$ to be   nonempty is $n\ge 
x\ge 
k\ge 0.$  An immediate observation is that $B_n=\cup_{x=0}^n\cup_{k=0}^x 
B_n^{x,k}. $ If $|S|$ denotes the cardinality of the set $S$, then we have 
\begin{equation}\label{eq:Bn=Bnxk}
 2^n=\sum\limits_{x=0}^n\sum\limits_{k=0}^x F_n(x,k),
\end{equation}
where $F_n(x,k)=|B_n^{x,k}|.$ The value of $F_n(x,k)$ is defined to be zero 
whenever $n< 0.$ Many counting problems on binary strings can 
be expressed in terms of $F_n(x,k).$ For example,
\begin{enumerate}
 \item  the number of binary strings of length 
$n$ with $x$ zeros is $\binom{n}{x}.$ Hence, we have 
\begin{equation}\label{eq:ncx}
 \binom{n}{x}=\sum\limits_{k=0}^xF_n(x,k).
\end{equation}
\item the number of binary strings of length $n$ with at least $r$ consecutive 
zeros 
is equal to $\sum\limits_{x=r}^n\sum\limits_{k=r}^x F_n(x,k).$
\item  the number of binary strings  
of  length $n$ with no consecutive zeros is  $f_n$ (for  
example, see \cite{KR,GR1}),  where 
$f_n=f_{n-1}+f_{n-2}$, $n\ge 3$ and  $f_1=2, f_2=3.$ Hence,
\begin{equation}\label{eq:fn=Fnxk}
 f_n=\sum_{x=0}^n 
F_n(x,0)+ \sum_{x=0}^n F_n(x,1)=1+\sum_{x=1}^n F_n(x,1).
\end{equation}
\item In particular,  in our work~\cite{Moni:Satya} the number of primitive symmetric companion matrices with a given exponent is expressed in terms of 
$F_n(x,k).$
\end{enumerate}
 Let $C_n$ denote the $n^{th}$ Catalan number. It is known   
that  $C_n= \binom{2n+1}{n+1}-2\binom{2n}{n+1}$ (see~\cite{Koshy}), hence, from 
Equation~(\ref{eq:ncx}) it follows that every Catalan number can be written in  
terms of $F_n(x,k).$ In Equation~(\ref{eq:fn=Fnxk}) we showed that Fibonacci 
numbers 
can be  expressed in terms of $F_n(x,k)$, similarly, we will show that 
triangular numbers, oblong numbers, tetrahedral numbers  are also expressed in 
terms of $F_n(x,k)$. 

In Section~\ref{sec:FNXK>0}, we will address the problem of  finding all 
$(x,k)$ such that $B_n^{x,k}\ne \emptyset.$ We will also count the number of such 
sets.
In Section~\ref{sec:Formula} we will give a recurrence  formula for $F_n(x,k).$ 
In the last Section~\ref{sec:Applications}, we will show that the problems 
posed 
in \cite{NY1} can  be expressed in terms of $F_n(x,k).$ One to one correspondence between the
set of compositions of $n+1$ with largest summand $k+1$ and 
$B_n^{x,k}$ is provided in Subsection~\ref{sec:Partitions}. Similar results were obtained for palindromic binary words in Section~\ref{section:PBW}.

\section{Finding all $(x,k)$ with $F_n(x,k)>0.$}\label{sec:FNXK>0}
The condition that  $n\ge x\ge k\ge 0$ for $F_n(x,k)>0$  is necessary but not 
sufficient. For example, for $n=5$,  $B_5^{x,k}\ne \emptyset$ if and only if 
$(x,k)\in \{(0,0), (1,1),(2,2),(3,3),(4,4),(5,5), 
(2,1),(3,1),(3,2),(4,2),(4,3)\}.$ Also note that if $k=0$, then $x=0$ and 
$\underbrace{11\cdots111}_{\text{$n$ times}}$ is the only element in  
$B_n^{0,0}.$ The following result 
establishes  upper bound on $x$  (or equivalently lower bound for $n-x$) for a 
given $n,k\in \N$ such that $F_n(x,k)>0.$ For example, if $n=5$ and $k=1$, then 
 the maximum value $x$ can  assume  is $3.$  
Recall if $y\in \R$, then  $\lfloor y \rfloor$ denotes the largest integer 
less than or equal $y$.
 
\begin{lemma}\label{lem:boundonx}
Let $n,k\in \N$ and $n\ge k$. Then  $$F_{n}(x,k) > 
0\Leftrightarrow \begin{cases}
                x+ \lfloor \frac{x}{k} \rfloor \leq n  & \mbox{if $k\nmid x$,}\\

x+ \frac{x}{k} -1  \leq n & \mbox{if $k | x$.}
               \end{cases}$$
\end{lemma}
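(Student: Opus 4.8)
The plan is to prove both implications from the ``block structure'' of the runs of zeros in a string of $B_n^{x,k}$. Throughout we use the standing assumption $n\ge x\ge k\ge 1$: indeed $F_n(x,k)>0$ already forces $x\ge k$, so the displayed inequalities are only to be tested in that range (if, say, $0\le x<k$, then $F_n(x,k)=0$ and there is nothing to prove).

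For the forward implication, suppose $w\in B_n^{x,k}$. Since $k\ge 1$, $w$ contains at least one zero; let $a_1,\dots,a_m$ (so $m\ge 1$) be the lengths of its maximal runs of zeros, read from left to right. Then $1\le a_i\le k$ for all $i$, $\sum_{i=1}^m a_i=x$, and $a_j=k$ for some $j$. Consecutive maximal zero-runs are separated by at least one $1$, so $w$ has at least $m-1$ ones, whence $n=x+(\#\text{ones of }w)\ge x+m-1$. From $x=\sum_i a_i\le mk$ we get $m\ge\lceil x/k\rceil$. If $k\mid x$ this gives $m\ge x/k$, so $n\ge x+x/k-1$; if $k\nmid x$ then $\lceil x/k\rceil=\lfloor x/k\rfloor+1$, so $n\ge x+\lfloor x/k\rfloor$. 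These are exactly the claimed upper bounds on $x$.

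For the converse I will write down an explicit witness in $B_n^{x,k}$. Set $q=\lfloor x/k\rfloor$; since $x\ge k$ we have $q\ge 1$. If $k\mid x$, take the concatenation of $q$ blocks $0^k$ with a single $1$ inserted between consecutive blocks, followed by $n-x-(q-1)$ additional $1$'s; the hypothesis $n\ge x+x/k-1$ is precisely what makes this last count nonnegative, the total length is $x+(n-x)=n$, the number of zeros is $x$, and every maximal run of zeros has length $k$. If $k\nmid x$, write $x=qk+r$ with $1\le r\le k-1$, and take the concatenation of the $q$ blocks $0^k$ and one block $0^r$, separated by single $1$'s, followed by $n-x-q$ additional $1$'s (this last count is nonnegative by $n\ge x+\lfloor x/k\rfloor$); again the total length is $n$, there are $x$ zeros, and the longest run of zeros is $k$ because $q\ge 1$ supplies a run of length $k$ while $r<k$. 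In both cases $B_n^{x,k}\ne\emptyset$, so $F_n(x,k)>0$.

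The computations involved are routine; the one place that needs care is the case $k\nmid x$ of the converse, where one must use $x\ge k$ to ensure $q=\lfloor x/k\rfloor\ge 1$, for otherwise the constructed string would have longest run $r<k$ rather than $k$. A secondary bookkeeping point is to pad with the correct number of trailing $1$'s so that the two thresholds $x+\lfloor x/k\rfloor$ and $x+x/k-1$ emerge in the right cases; this is easily checked against the $n=5$, $k=1$ example recorded just before the statement.
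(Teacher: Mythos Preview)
Your proof is correct and follows essentially the same approach as the paper's: both use the maximal zero-run decomposition to get the lower bound $m\ge\lceil x/k\rceil$ on the number of runs (hence $n-x\ge m-1$), and both construct the explicit witness by concatenating blocks $0^k1$ with a possible remainder block and trailing $1$'s. Your write-up is in fact a bit more careful than the paper's in isolating the role of the assumption $x\ge k$ (needed so that $q\ge 1$ in the $k\nmid x$ case).
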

\begin{proof} Suppose that $F_n(x,k)>0$ or equivalently $X\in  B_n^{x,k}$. Then, 
by definition, $X$ contains $x$ $0's$ and $n-x$ $1's$, further $X$ does not 
contain a subword with only $0's$ of length $k+1$ or more. Hence, the maximum 
number of subwords in $X$  with only $0's$ and of length $k$ are at most 
$\lfloor 
\frac{x}{k} \rfloor$ or $ \frac{x}{k}$ depending on $k\nmid x$ 
or $k|x$ respectively. Or equivalently, 
$\lfloor \frac{x}{k} \rfloor \leq n-x$ if $k\nmid x$, and $ \frac{x}{k} 
-1 \leq n-x$   if $k | x.$

Conversely, suppose that  $\lfloor \frac{x}{k} \rfloor \leq n-x$ if $k\nmid x$, 
and $ \frac{x}{k} 
 -1 \leq n-x$   if $k | x.$ We have to show that $B_n^{x,k}\ne \emptyset$ 
 in each case.
\begin{description}
 \item[$k\nmid x$:] Suppose $\lfloor \frac{x}{k} \rfloor \leq n-x$,  
 then the 
string  $X=\underbrace{YY\cdots YY}_{\mbox{$\lfloor \frac{x}{k} \rfloor$ 
times}}\;\;\;\underbrace{00\cdots00}_{\text{$x-k\lfloor \frac{x}{k} 
\rfloor$ times }}\;\;\;\underbrace{11\cdots1}_{\text {$n-x- \lfloor 
\frac{x}{k} \rfloor$ 
 times}}\in B_n^{x,k},$ where $Y=\underbrace{000\cdots 0}_{\text{$k$ times}}1$ 
be a subword of $X$ of length $k+1$ containing $k$ consecutive $0's$  
followed by a $1$.
\item[$k|x$:] Suppose  $ \frac{x}{k}-1 \leq n-x$. Then   the string 
$X=\underbrace{YY\cdots YY}_{\mbox{$\frac{x}{k}-1$ 
times}}\:\;\underbrace{000\cdots 
0}_{\text{$k$ times}}\;\;\underbrace{11\cdots1}_{\text {$n-x- 
\frac{x}{k}-1$ 
 times}}\in B_n^{x,k}.$
\end{description}
\end{proof}
In the above lemma, we found an upper bound for  $x$ for a given $n$ and $k$. 
Since $F_n(x,x)>0$ for all $0\le x\le n$, hence, for a given $n$ and $x$ the 
maximum value for $k$ such that $F_n(x,k)>0$ is $x$. For  fixed  $n$ and $x$, 
the following result will provide the least  value of $k$ such that 
$F_n(x,k)>0$. 
\begin{cor}\label{cor:boundonk}
 Let $n,x\in \N$ be fixed and $F_n(x,k)>0$. Then the smallest  value of  $k$ is
$\left\lfloor 
\frac{n}{n-x+1} \right\rfloor.$
\end{cor}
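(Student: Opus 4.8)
The plan is to invert Lemma~\ref{lem:boundonx}. There, for fixed $n$ and $k$ we obtained the range of admissible $x$; here we fix $n$ and $x$ and ask for the smallest admissible $k$. Since $F_n(x,k)>0$ requires $n\ge x\ge k$, and since $F_n(x,x)>0$ always, the set of admissible $k$ is a nonempty finite set whose maximum is $x$; we only need its minimum. First I would restate the two cases of Lemma~\ref{lem:boundonx} as conditions on $k$ for fixed $n,x$: in the case $k\nmid x$ the condition $x+\lfloor x/k\rfloor\le n$ is equivalent to $\lfloor x/k\rfloor\le n-x$; in the case $k\mid x$ it becomes $x/k-1\le n-x$, i.e. $x/k\le n-x+1$. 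In both cases the left-hand side is a (weakly) decreasing function of $k$, so the admissible $k$'s form an ``upper interval'' and it suffices to identify the threshold.

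The key computation is to show that $k_0:=\lfloor n/(n-x+1)\rfloor$ is admissible and that $k_0-1$ is not. For admissibility: write $m=n-x+1\ge 1$, so $k_0=\lfloor n/m\rfloor$ and hence $k_0 m\le n$, which rearranges to $k_0(n-x+1)\le n$, i.e. $k_0(n-x)\le n-k_0\le x$ (using $k_0\le x$), so $x/k_0\ge n-x+1-\tfrac{\text{something}}{k_0}$; more cleanly, $k_0(n-x+1)\le n$ gives $x\ge k_0(n-x)+k_0-1\ge k_0\lfloor x/k_0\rfloor$... I would instead argue directly: from $k_0 m\le n$ we get $\lfloor x/k_0\rfloor\le x/k_0\le (n - (k_0-1))/k_0\cdot\ldots$; the cleanest route is to verify that the explicit string witnessing nonemptiness in the proof of Lemma~\ref{lem:boundonx} has length $\le n$ when $k=k_0$, and then pad with $1$'s. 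For the minimality of $k_0$: suppose $F_n(x,k)>0$ for some $k<k_0$. Then $k+1\le k_0=\lfloor n/m\rfloor$, so $(k+1)m\le n$, i.e. $(k+1)(n-x+1)\le n$, which gives $(k+1)(n-x)\le n-(k+1)<x$, hence $n-x<\lceil x/(k+1)\rceil\le\lceil x/k\rceil$. A short case analysis on whether $k\mid x$ then contradicts the inequality from Lemma~\ref{lem:boundonx}: if $k\nmid x$ we need $\lfloor x/k\rfloor\le n-x$, but $n-x<\lfloor x/k\rfloor$ follows since $k(n-x)\le (k+1)(n-x)-(n-x)<x-(n-x)<x$ forces... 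I would tighten this to $k(n-x)<x$, hence $n-x<x/k$, hence $n-x\le\lfloor x/k\rfloor-1$ when $k\nmid x$; and similarly $x/k>n-x+1$ when $k\mid x$ — either way Lemma~\ref{lem:boundonx} is violated.

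The main obstacle I anticipate is keeping the floor/ceiling bookkeeping honest, especially reconciling the two cases $k\mid x$ and $k\nmid x$ with a single closed form $\lfloor n/(n-x+1)\rfloor$: one must check that the floor ``absorbs'' the off-by-one difference between the two branches of Lemma~\ref{lem:boundonx}. A clean way to sidestep separate cases is to observe that both branches are equivalent to the single inequality $k\lceil x/k\rceil + (x - k\lfloor x/k\rfloor \text{ correction})\le n$; more usefully, both are implied by and imply $k\cdot\lceil \tfrac{x+1}{k}\rceil \le n+1$ — I would look for such a unified inequality, prove it is equivalent to ``$k$ admissible'', and then note that the largest $k$ satisfying $k\cdot\frac{x+1}{k}\le n+1$ trivially... no, I will instead just do the two cases explicitly, since each is a one-line manipulation once the equivalence with Lemma~\ref{lem:boundonx} is written down. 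Finally I would double-check the degenerate cases $x=n$ (giving $k_0=\lfloor n/1\rfloor=n$, correct since the only string is $0^n$) and $x=0$ (giving $k_0=0$, consistent with $B_n^{0,0}\ne\emptyset$), and the case $n-x+1\nmid n$ against the small example $n=5$, $x=3$ in the text, where $\lfloor 5/3\rfloor=1$ matches the claimed least value $k=1$.
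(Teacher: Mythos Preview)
The paper states this as a corollary of Lemma~\ref{lem:boundonx} with no proof given, so your plan to invert that lemma is exactly the intended route, and the observation that admissibility is monotone in $k$ (so the admissible $k$ form an upper interval) is the right framing. However, two of your intermediate inequalities are wrong, and the second one is fatal to the argument as written.

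First, the step ``$(k+1)(n-x)\le n-(k+1)<x$'' fails in general: with $n=100$, $x=90$, $m=n-x+1=11$, $k_0=\lfloor 100/11\rfloor=9$, take $k=5<k_0$; then $n-(k+1)=94\not<90=x$. Second, your replacement ``$k(n-x)<x$, hence $n-x\le\lfloor x/k\rfloor-1$ when $k\nmid x$'' is too weak: from $k(n-x)<x$ you only get $n-x<x/k$, hence (since $n-x$ is an integer) $n-x\le\lfloor x/k\rfloor$, not $\lfloor x/k\rfloor-1$; this does not yet contradict $\lfloor x/k\rfloor\le n-x$, since equality is still possible. What you actually get from $(k+1)m\le n$ is the stronger $km\le n-m=x-1$, and that is what you should use.

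The clean route is precisely the unification you were groping for. Writing $m=n-x+1$, the two branches of Lemma~\ref{lem:boundonx} merge into the single equivalence
\[
F_n(x,k)>0 \quad\Longleftrightarrow\quad x\le km.
\]
Indeed, if $k\mid x$ the lemma's condition is $x/k\le m$; if $k\nmid x$ it is $\lfloor x/k\rfloor\le m-1$, i.e.\ $x<km$, and since $k\nmid x$ already forces $x\ne km$ this is again $x\le km$. Hence the admissible $k$ are exactly those with $k\ge\lceil x/m\rceil$, and the standard identity $\lceil a/b\rceil=\lfloor (a+b-1)/b\rfloor$ gives
\[
\left\lceil\frac{x}{m}\right\rceil=\left\lfloor\frac{x+m-1}{m}\right\rfloor=\left\lfloor\frac{n}{n-x+1}\right\rfloor.
\]
Your boundary checks ($x=n$, $x=0$, and the example $n=5$, $x=3$) are all correct.
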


\begin{theorem}\label{thm:|S_n|}
Let $n \in \mathbb{N}$ and $S_{n}=\{(x,k):F_{n}(x,k)>0\}$. Then 
$|S_{n}|=\binom{n+2}{2}-\sum\limits_{i=0}^{n} \lfloor \frac{n}{i+1} 
\rfloor$.
\end{theorem}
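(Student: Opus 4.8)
The plan is to compute $|S_n|$ by partitioning $S_n$ according to the first coordinate $x$. Since $n\ge x\ge k\ge 0$ is necessary for $F_n(x,k)>0$, we have $S_n\subseteq\{(x,k):0\le k\le x\le n\}$, so $|S_n|=\sum_{x=0}^{n}a_x$, where $a_x$ denotes the number of $k$ with $F_n(x,k)>0$. The task thus reduces to finding $a_x$ in closed form.

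The case $x=0$ is immediate: a string with no zeros has longest zero-run $0$, so $a_0=1$. For $1\le x\le n$ I would first rewrite Lemma~\ref{lem:boundonx} without its case split. Since $\lceil x/k\rceil-1$ equals $\lfloor x/k\rfloor$ when $k\nmid x$ and equals $x/k-1$ when $k\mid x$, the lemma is equivalent to
\[
F_n(x,k)>0\ \Longleftrightarrow\ \Bigl\lceil\tfrac{x}{k}\Bigr\rceil\le n-x+1\qquad(1\le k\le x).
\]
The map $k\mapsto\lceil x/k\rceil$ is non-increasing on $k\ge 1$ and takes the value $1\le n-x+1$ at $k=x$; hence the admissible $k$ in $\{1,\dots,x\}$ form an interval whose largest element is $x$ and whose smallest element is, by Corollary~\ref{cor:boundonk}, $\bigl\lfloor n/(n-x+1)\bigr\rfloor$ (which is $\le x$, and equals $\lceil x/(n-x+1)\rceil$, consistent with the reformulation). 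Therefore $a_x=x-\bigl\lfloor\tfrac{n}{n-x+1}\bigr\rfloor+1$ for $1\le x\le n$; and since $\lfloor n/(n+1)\rfloor=0$ this same expression returns $1$ at $x=0$, so $a_x=x+1-\bigl\lfloor\tfrac{n}{n-x+1}\bigr\rfloor$ for every $0\le x\le n$.

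Summing over $x$ then gives $|S_n|=\sum_{x=0}^{n}(x+1)-\sum_{x=0}^{n}\bigl\lfloor n/(n-x+1)\bigr\rfloor$. The first sum is $\sum_{j=1}^{n+1}j=\binom{n+2}{2}$, and the substitution $i=n-x$ turns the second into $\sum_{i=0}^{n}\lfloor n/(i+1)\rfloor$, which yields the asserted identity.

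The only genuinely delicate point is the middle step: one must check that for fixed $x$ the admissible values of $k$ form a single contiguous block (so that their number is determined by the smallest and largest admissible $k$), and that the smallest is $\lfloor n/(n-x+1)\rfloor$. The monotonicity of $k\mapsto\lceil x/k\rceil$ settles the former and Corollary~\ref{cor:boundonk} the latter; the one-case rewriting of Lemma~\ref{lem:boundonx} and the two elementary summations are routine bookkeeping. As a sanity check, for $n=5$ the formula returns $\binom{7}{2}-(5+2+1+1+1+0)=21-10=11$, matching the eleven pairs listed at the start of Section~\ref{sec:FNXK>0}.
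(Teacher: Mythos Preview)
Your proof is correct and follows essentially the same approach as the paper: both partition $S_n$ according to the first coordinate (you use $x$; the paper uses $i=n-x$), invoke Corollary~\ref{cor:boundonk} for the least admissible $k$, count each slice as $x+1-\lfloor n/(n-x+1)\rfloor$, and sum. Your version adds useful detail the paper omits, namely the unified rewriting $\lceil x/k\rceil\le n-x+1$ of Lemma~\ref{lem:boundonx} and the monotonicity argument showing the admissible $k$ form an interval.
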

\begin{proof}
For each $i$, $0\le i \le n$, we define the sets 
$$B_{i}=\left\{0,1,2,\ldots,(n-i)-\left\lfloor \frac{n}{i+1} 
\right\rfloor\right\}\;\; and 
\;\;A_{i}=\left\{(n-i,\left\lfloor \frac{n}{i+1} \right\rfloor +j):j \in 
B_{i}\right\}.$$
Then from Corollary~\ref{cor:boundonk}, we have 
$S_{n}=\bigcup\limits_{i=0}^n 
A_{i}$.

Hence,   the result is derived  
from the fact that 
$$|S_n|=\sum\limits_{i=0}^n|A_i|=\sum\limits_{i=0}^n|B_i|=\sum\limits_{i=0}
^n (n-i+1)-\left\lfloor \frac{n}{i+1} 
\right\rfloor=\binom{n+2}{2}-\sum\limits_{i=0}^{n} 
\left\lfloor\frac{n}{i+1}\right\rfloor.$$
\end{proof}
\section{Formula for $F_n(x,k)$}\label{sec:Formula}
In this section we will  find a recurrence formula for $F_n(x,k)$.  First we 
explore the values of $F_n(x,k)$,   which will follow immediately from its 
definition. 

\begin{enumerate}
 \item Let $1\le x\le n.$, then  $F_{n}(x,x)=(n-x)+1.$  Consequently,  
we have $\{F_n(1,1)|n\in \N\}=\N.$
\item From  Equation~(\ref{eq:ncx}) and $F_n(2,2)$ we have  
$F_{n}(2,1)=\frac{(n-1)(n-2)}{2}$ for all $n\ge 2.$\\
 Hence, the set $\{F_{n}(2,1):n\in\mathbb{N},\;n\geq2\}$ is the set of 
triangular numbers.
 \item Let $x,n\in \N\setminus\{1,2\}.$, then  $F_{n}(x,x-1)=(n-x)(n-x+1).$ 

\begin{proof}
Suppose $a_1a_2\dots a_n$ be a binary string  of length $n$ which 
contains $x$ zeros among which $x-1$  are consecutive. Then 
$a_i=a_{i+1}=a_{i+2}=\dots=a_{i+x-2}=0$, for some $i 
\in\{1,2,\ldots,n-x+2\}$, further $a_{i-1}=a_{i+x-1}=1$, whenever $i\ne 1, 
n-x+2$. If  $i=1$, then $a_{x}=1$ 
and if $i=n-x+2$, then $a_{n-x+1}=1$. Thus, for each $i 
\in\{2,3,\ldots,n-x+1\}$, there are 
$n-x-1$ possible choices to place remaining zero. And for 
$i=1 \;or \; i=n-x+1$, 
 there are $(n-x)$  possible choices to place remaining zero. Hence, 
$F_{n}(x,x-1)=(n-x-1)(n-x)+2(n-x)=(n-x)(n-x+1)$.
\end{proof}
Thus, for $x\ge 3,$ the sequence $\{F_n(x,x-1)\}$ is the sequence 
of 
oblong numbers. 
Similarly one can prove that the sequence $\{F_n(3,1)|n\ge 5\}$ is the sequence of 
tetrahedral numbers.
\end{enumerate}
The following result computes $F_{n}(x,k)$ explicitly whenever $x-k<k.$
\begin{theorem}
Let $n-1>x\ge k\ge 1$ such that $F_{n}(x,k)>0$ and $x<2k$, then $$F_{n}(x,k) = 
2\binom{n-k-1}{x-k}+(n-k-1)\binom{n-k-2}{x-k}.$$ 
\end{theorem}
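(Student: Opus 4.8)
The plan is to count the strings in $B_n^{x,k}$ directly by tracking the position of a longest run of zeros, exploiting the hypothesis $x < 2k$, which forces there to be exactly one maximal run of length exactly $k$ (a second run of length $k$ would already require $2k \le x$ zeros). First I would fix a string $X \in B_n^{x,k}$ and locate its unique run of $k$ consecutive zeros, say occupying positions $i, i+1, \ldots, i+k-1$. Since $x < 2k$, every other maximal run of zeros in $X$ has length strictly less than $k$, and the total number of zeros outside the distinguished run is $x - k < k$. So I would split into cases according to whether the distinguished $k$-run sits at the very left end of $X$, at the very right end, or strictly in the interior (bordered by a $1$ on each side); the interior case is where the factor $(n-k-1)$ will come from, as it records the choice of where to insert the block, while the two boundary cases will together contribute the $2\binom{n-k-1}{x-k}$ term.

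For the interior case, I would think of $X$ as obtained by choosing a binary string $X'$ of length $n - k - 1$ containing $x - k$ zeros (there are $\binom{n-k-1}{x-k}$ of these), and then inserting the block $0^k 1$ or $1 0^k$ at one of its positions; because $x - k < k$, the string $X'$ automatically has no run of $k$ zeros, and the insertion of the block (with its adjacent $1$) guarantees the resulting $k$-run is isolated and maximal, so no overcounting or undercounting occurs across different insertion points. Counting the insertion positions carefully — there are $n - k - 1$ gaps (or a symmetric bookkeeping with the two orientations of the block) — yields the $(n-k-1)\binom{n-k-2}{x-k}$ contribution. One must double-check the edge-of-string bookkeeping so that strings with the $k$-run adjacent to the boundary are counted in the boundary cases and not again here; this is the step where a careful, unambiguous encoding (e.g. deleting the block $0^k 1$ when the run is not rightmost, and $1 0^k$ when it is rightmost) is essential.

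The boundary cases are simpler: if the $k$-run is a prefix, then $X = 0^k 1 Z$ where $Z$ is a string of length $n - k - 1$ with $x - k$ zeros and (again automatically, since $x-k<k$) no $k$-run, giving $\binom{n-k-1}{x-k}$ strings; the suffix case is symmetric, giving another $\binom{n-k-1}{x-k}$, for a total of $2\binom{n-k-1}{x-k}$. Summing the three contributions gives $2\binom{n-k-1}{x-k} + (n-k-1)\binom{n-k-2}{x-k}$. The main obstacle I anticipate is making the case split genuinely disjoint and exhaustive: I need the bijections in the three cases to partition $B_n^{x,k}$ with no string counted twice, which hinges on pinning down exactly one canonical $k$-run (uniqueness from $x < 2k$ does this) and on a clean convention for "which block gets deleted" so that the interior count of $n-k-1$ positions is exact rather than off by a boundary term. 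A secondary check is the role of the hypothesis $n - 1 > x$: it ensures $n - k - 1 \ge x - k + 1 > x - k$, so all the binomial coefficients are in the valid range and the string $X'$ genuinely has room for at least one $1$, keeping the constructions well-defined.
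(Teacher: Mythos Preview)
Your approach is essentially the paper's: use $x<2k$ to pin down a unique $k$-run, then split according to whether that run is a prefix, a suffix, or strictly interior. The paper handles the interior case more directly than your insertion/deletion encoding---it simply fixes the starting index $i\in\{2,\ldots,n-k\}$ of the $k$-run, observes that $a_{i-1}=a_{i+k}=1$ are forced, and then freely places the remaining $x-k$ zeros among the other $n-k-2$ positions---which immediately yields $(n-k-1)\binom{n-k-2}{x-k}$ and sidesteps the bookkeeping concern you (rightly) flag.
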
                  
\begin{proof}
Let $a_1a_2\dots a_n\in B_n^{x,k}$. Suppose $a_{1}=a_{2}=\dots=a_{k}=0$, 
then we have $a_{k+1}=1$. Since $x-k<k$, there are  
$\binom{n-k-1}{x-k}$ different strings in $B_n^{x,k}$ with first 
$k$ 
bits as zero. Similarly, there are  $\binom{n-k-1}{x-k}$ different strings in 
$B_n^{x,k}$ with the  last $k$ bits equal to 
zero.  If we 
choose $k$ consecutive zeros as $a_{i}=a_{i+1}=\dots=a_{i+k-1}=0$, for  $i 
\in \{2,3,\ldots,n-k\}$, then $(x-k)$ zeros can be placed at any of the 
remaining $n-(k+2)$ places in the string. Thus, for each such 
$i$, there are  $\binom{n-k-2}{x-k}$ strings which belong to $B_n^{x,k}.$
Hence, the result follows.
\end{proof}
Using above theorem, we can compute $F_n(n-1,k)$, for all possible values of $k$ 
except when $n$ is 
odd and $k=\frac{n-1}{2}$. In this case,  the value of
$F_n(n-1,k)$ will be $1.$

The following result provides a recurrence relation for  $F_n(x,k).$ The 
results which we proved until now in this section will supply the necessary initial 
conditions. Thus, we can evaluate $F_n(x,k)$ for any $n,x,k.$
\begin{theorem}\label{thm:main:recurrence}
Let $n\in \N\setminus \{1,2\}$, $1 \leq 
x \leq n-2$ and $\lfloor \frac{n}{n-x+1} \rfloor \leq k \leq x$. Then 
$$F_{n}(x,k)=\sum\limits_{i=0}^{k-1}F_{n-i-1}(x-i,k)+\sum\limits_{j=0}^{k}F_{
n-k-1}(x-k,j).$$ 
\end{theorem}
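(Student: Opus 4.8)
The plan is to condition on the location of the first $1$ in the string, together with the block of zeros it terminates. Let $X = a_1a_2\cdots a_n \in B_n^{x,k}$. Since $k \ge \lfloor \frac{n}{n-x+1}\rfloor \ge 1$ and $x \le n-2$, the string contains at least one $1$. Let $a_1 = a_2 = \cdots = a_i = 0$ and $a_{i+1} = 1$ be the initial run of zeros (allowing $i = 0$ when $a_1 = 1$); because the longest run of zeros in $X$ has length exactly $k$, we have $0 \le i \le k$. I would split into two cases according to whether $i < k$ or $i = k$.

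First, suppose $0 \le i \le k-1$. Deleting the prefix $a_1\cdots a_i a_{i+1} = 0^i 1$ leaves a string $a_{i+2}\cdots a_n$ of length $n-i-1$ with $x-i$ zeros, and its longest run of zeros is still exactly $k$: it cannot exceed $k$ since $X$ had no run longer than $k$, and it must equal $k$ because the deleted prefix has only $i < k$ zeros, so the unique longest run (of length $k$) in $X$ survives in the suffix. Conversely, prepending $0^i 1$ to any string in $B_{n-i-1}^{x-i,k}$ produces a string in $B_n^{x,k}$. This bijection gives the first sum $\sum_{i=0}^{k-1} F_{n-i-1}(x-i,k)$.

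Second, suppose $i = k$, i.e.\ $a_1 = \cdots = a_k = 0$ and $a_{k+1} = 1$. Now the deleted prefix already realizes a run of length $k$, so the remaining suffix $a_{k+2}\cdots a_n$, of length $n-k-1$ and with $x-k$ zeros, may have any longest run of zeros $j$ with $0 \le j \le k$ (it just cannot exceed $k$). Conversely, any string in $B_{n-k-1}^{x-k,j}$ for $0 \le j \le k$ yields, after prepending $0^k 1$, a string in $B_n^{x,k}$, and distinct $(j, \text{suffix})$ pairs give distinct strings. This accounts for $\sum_{j=0}^{k} F_{n-k-1}(x-k,j)$. Adding the two cases gives the claimed recurrence; the range hypothesis $\lfloor \frac{n}{n-x+1}\rfloor \le k \le x$ and $n \ge 3$ ensures $F_n(x,k) > 0$ (Corollary~\ref{cor:boundonk}) and that all indices appearing on the right are non-negative, with the convention $F_m(\cdot,\cdot) = 0$ for $m < 0$ absorbing degenerate terms.

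The one point requiring care — and the main obstacle — is the claim in the first case that deleting a prefix with fewer than $k$ zeros cannot drop the longest run below $k$. This is where the \emph{uniqueness of the maximal length} (not of the run itself) matters: some run of length exactly $k$ lies entirely within $a_{i+2}\cdots a_n$, because the prefix $0^i$ contributes at most $i < k$ zeros and the run of length $k$ in $X$, if it met the prefix, would have to include the $1$ at position $i+1$, which is impossible. I would state this explicitly as a short lemma-style paragraph inside the proof. The boundary subtleties ($i = 0$; the suffix being empty or length $< 0$, handled by the zero convention; $x - k = 0$ forcing $j = 0$) are routine once this observation is in place.
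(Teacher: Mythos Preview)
Your proof is correct and follows essentially the same approach as the paper's: both arguments condition on the initial maximal block of zeros together with the first $1$, then show the suffix lies in $B_{n-i-1}^{x-i,k}$ when $i<k$ and in $\bigcup_{j=0}^{k} B_{n-k-1}^{x-k,j}$ when $i=k$. The paper organizes this by first splitting into $H_n^{x,k}$ (strings starting with $1$) and $G_n^{x,k}$ (strings starting with $0$) and then invoking $|H_m^{a,b}|=|B_{m-1}^{a,b}|$, whereas you fold the ``$i=0$'' case into the same parametrization; the underlying bijection is identical.
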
                  
\begin{proof}
Let us denote $G_n^{x,k}$ and $H_n^{x,k}$ as set of all strings in $B_n^{x,k}$ 
starting with $0$ and $1$ respectively. Then  clearly $B_n^{x,k}=G_n^{x,k} \cup 
H_n^{x,k}$ and hence, $F_{n}(x,k)=|B_n^{x,k}|=|G_n^{x,k}| + |H_n^{x,k}|.$ Thus, 
it is sufficient to find $|G_n^{x,k}|$ and $|H_n^{x,k}|.$

If  $y \in H_n^{x,k}$, then $y$ is of the form $1y_1,$ where $y_1 \in 
H_{n-1}^{x,k} \cup G_{n-1}^{x,k}.$ Hence, 
\begin{equation}\label{eq:hnxk}
|H_n^{x,k}|=|H_{n-1}^{x,k}|+|G_{n-1}^{x,k}|=|B_{n-1}^{x,k}|. 
\end{equation}

Now suppose  $y \in G_n^{x,k}$, then  $y$ is of the form 
$\underbrace{00\dots 0}_{\text{$i$ times}}y_{1}$ or 
$\underbrace{00\dots 0}_{\text{$k$ times}}y_{2}$  where $y_{1} \in 
H_{n-i}^{x-i,k}$, 
$y_{2} \in H_{n-k}^{x-k,j}$, $1 \leq i \leq k-1$ and $0 \leq j \leq k$ or 
equivalently 
$|G_n^{x,k}|=\sum\limits_{i=1}^{k-1}|H_{n-i}^{x-i,k}|+\sum\limits_{
j=0}^{k}|H_{n-k}^{x-k,j}|.$ Hence, the result follows from 
Equation~(\ref{eq:hnxk}).
\end{proof}                   

For example, 
\begin{eqnarray*}
 F_{6}(3,2)&= & \sum\limits_{i=0}^{1}F_{6-i-1}(3-i,k)+\sum\limits_{j=0}^{2}F_{
6-2-1 }
(3-2,j)\\        
&=&F_{5}(3,2)+F_{4}(2,2)+F_{3}(1,0)+F_{3}(1,1)+F_{3}(1,2)=6+3+0+3+0=12.
\end{eqnarray*}
 And $\{001011, 001101, 010011, 011001,100110,101100,
 101001,110010,001110, 011100,100101,110100\}$ is the set $B_6^{3,2}.$
 Now we will give a direct proof for Equation~(\ref{eq:fn=Fnxk}). That is 
$1+\sum_{x=1}^n F_n(x,1)$ satisfies the recurrence relation  
$f_n=f_{n-1}+f_{n-2}$, $n\ge 3$ with initial conditions  $f_1=2, f_2=3.$  Let 
$a_n=1+\sum_{x=1}^n F_n(x,1).$  Then $a_1=1+F_1(1,1)=2$ and 
$a_2=1+F_2(1,1)+F_2(2,1)=3.$ We will prove result for $n>2$ using 
Theorem~\ref{thm:main:recurrence}. 
\begin{eqnarray*}
 a_n  &=& 1+\sum_{x=1}^n 
F_n(x,1)= 1+\sum_{x=1}^n [F_{n-1}(x,1)+F_{n-2}(x-1,0)+F_{n-2}(x-1,1)]\\
&=& 
\left(1+\sum_{x=1}^{n-1}F_{n-1}(x,1)\right)+\left(1+\sum_{x=1}^{n-2}F_{n-2}(x-
1, 1)\right)=a_{n-1}+a_{n-2}.
\end{eqnarray*}

We now express a few of the above identities in terms of matrix equations by 
constructing a matrix using the numbers $F_n(x,k).$ 
For a given $n$, $F_n$ denotes an $(n+1) \times (n+1)$ matrix and is defined as 
$F_n=[a_{xk}]$,
where $a_{xk}=F_n(x,k)$ for $0\le x,k\le n.$
For example, $$F_1=\begin{bmatrix}
                   1 & 0\\
                   0 &1
                  \end{bmatrix}, F_2=\begin{bmatrix}
                  1& 0& 0,\\
                  0& 2& 0\\
                  0& 0& 1
                  \end{bmatrix}, F_3=\begin{bmatrix}
                  1 & 0& 0& 0\\
                  0& 3& 0& 0\\
                 0& 1& 2& 0\\
       0& 0& 0& 1
                  \end{bmatrix}, F_4=\begin{bmatrix}
                  1& 0& 0& 0& 0\\
         0& 4& 0& 0& 0\\
        0& 3& 3& 0& 0\\
        0& 0& 2& 2& 0\\
        0& 0& 0& 0& 1\\
        \end{bmatrix}.$$
Let $\e_n$ denote the column vector in $\R^n$ with each entry as $1.$  Then the following hold.
\begin{enumerate}
 \item From the Equation (\ref{eq:Bn=Bnxk}), we have $\e_{n+1}^TF_n\e_{n+1}=2^n.$
 \item From the Equation (\ref{eq:ncx}), we have 
 $$F_n \e_{n+1}=\left[\binom{n}{0}, \binom{n}{1}, \ldots, 
\binom{n}{n}\right]^T,$$
 the $n^{th}$ row of Pascal triangle. 
 \item The row vector $\e_{n+1}^TF_n$ provides column sums of $F_n.$ 
 Since the first column of $F_n$ is the vector $[1,0,\ldots,0]^T.$ Hence, the 
sequence
 of first column sum of $F_n$ for $n=1,2,\dots$ is  
 $((\e_{n+1}^TF_n)_1)_{n=1}^\infty=(1,1,1,1,\ldots).$ From 
Equation (\ref{eq:fn=Fnxk}), we have 
 $((\e_{n+1}^TF_n)_2)_{n=1}^\infty=(1,2,4,7,12,20,33,54,88,\ldots).$
   \item From Theorem~\ref{thm:|S_n|}, the number of nonzero entries in the 
matrix $F_n$ is equal to
 $|S_n|.$  
 \end{enumerate}
 The eigenvalues of $F_n$ are $1,1,2,3,\ldots,n.$ Hence, the trace and 
determinant of $F_n$ 
 are respectively  $1+\frac{n(n+1)}{2}$ and $n!.$
  The matrix $F_n$ is diagonalizable as $[1,0,0,\ldots,0]^T$ and 
 $[0, 0,0,\ldots,1]^T$ are eigenvectors corresponding to eigenvalue $1.$ 
 We will explore a few more properties of these matrices in our future work.
 
\section{Applications}\label{sec:Applications}
   Equation (\ref{eq:fn=Fnxk}) counts the number of binary strings  
having no pair of consecutive $0$'s, (or  no pair of consecutive 
$1$'s). A binary string with no subword of length $r$ of zeros for $r\ge 1$
is  called as  a binary string having  no $r$-runs of $0$'s \cite{NY1,NY2,GR2}.
  R. Grimaldi 
and S. Heubach~\cite{GR2}, showed  that the total number of binary strings of 
length $n$ having no odd  runs of 
$1$'s is equal to $f_{n+1}$, the $(n+1)^{th}$ Fibonacci number. M. A. Nyblom 
~\cite{NY1}  related $r$-Fibonacci  sequences for any fixed integer $r\geq 2$ 
to the total number of binary strings of length $n$ without $r$-runs of $1$'s.

 M. A. Nyblom in~\cite{NY1} denoted $S_{r}(n)$ as the set of all binary strings 
of length $n$ without  $r$-runs of ones, where $n\in \mathbb{N}$ and $r \geq 
2$, and $T_{r}(n)=|S_{r}(n)|$. For example, if  $n=3$, $r=2$, then 
$S_{2}(3)=\{000, 101,001,100,010\}$, $T_{2}(3)=5$.  For $n>r,$ M. A. Nyblom \cite{NY1} 
showed that \begin{equation*}
T_{r}(n)=\sum\limits_{i=1}^{r}T_{r}(n-i)
\end{equation*}
with $r$ initial conditions $T_{r}(s)=2^s$ for $s=1,2,\ldots,r-1$ and 
$T_{r}(r)=2^r-1$. The following result 
expresses $T_{r}(n)$ in terms of $F_{n}(x,k).$

 \begin{theorem}\label{thm:Trn=Fnxk}
  Let $n,r\in \N$ and $r\ge 2.$ Then 
$T_{r}(n)=1+\sum\limits_{k=1}^{r-1}\sum\limits_{x= 
k}^{n}F_{n}(x,k).$
 \end{theorem}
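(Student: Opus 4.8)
The plan is to set up a bijection between $S_r(n)$, the set of binary strings of length $n$ without $r$-runs of ones, and a suitable collection of sets $B_n^{x,k}$, by exploiting the obvious symmetry that swaps the roles of $0$ and $1$. First I would observe that complementing every bit of a string (i.e. sending $0\mapsto 1$ and $1\mapsto 0$) is a bijection from $S_r(n)$ onto the set of binary strings of length $n$ having no $r$-run of \emph{zeros}, equivalently those whose longest run of zeros has length at most $r-1$. In the $F_n(x,k)$ bookkeeping, a string of length $n$ whose longest run of zeros has length exactly $k$ and which contains exactly $x$ zeros is counted by $F_n(x,k)$; so the complemented image of $S_r(n)$ decomposes as a disjoint union over all admissible pairs $(x,k)$ with $0\le k\le r-1$ and $k\le x\le n$.

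Next I would handle the degenerate case $k=0$ separately: as noted right after Equation~(\ref{eq:ncx}) in the text, $F_n(0,0)=1$ (the all-ones string is the unique string with no zeros), and there are no strings with $k=0$ and $x\ge 1$. So the contribution of $k=0$ to the count is exactly $1$. For $k\ge 1$, the number of strings of length $n$ with longest zero-run exactly $k$ and with $x$ zeros is $F_n(x,k)$, and $x$ ranges over $k\le x\le n$ (the lower bound $x\ge k$ being forced, the upper bound $x\le n$ trivial, and any further restriction from Lemma~\ref{lem:boundonx} simply makes the corresponding $F_n(x,k)$ vanish, so summing up to $n$ is harmless). Summing over all these disjoint cases gives
\begin{equation*}
T_r(n)=|S_r(n)|=1+\sum_{k=1}^{r-1}\sum_{x=k}^{n}F_n(x,k),
\end{equation*}
which is the claimed identity.

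The only point requiring a little care — and the main (albeit mild) obstacle — is making sure the index ranges are exactly right, in particular that extending the $x$-sum all the way to $n$ rather than to the true upper bound given by Lemma~\ref{lem:boundonx} introduces no spurious terms; this is immediate because $F_n(x,k)$ is defined to be $0$ outside $S_n$. One should also double-check the boundary $r=2$: there the inner sum is only over $k=1$, giving $T_2(n)=1+\sum_{x=1}^n F_n(x,1)$, which matches Equation~(\ref{eq:fn=Fnxk}) and the known value $T_2(n)=f_n$ (indeed $S_2(n)$ is the set of strings with no two consecutive ones, and complementation sends it to the strings with no two consecutive zeros). As a sanity check one can verify $T_2(3)=1+F_3(1,1)+F_3(2,1)+F_3(3,1)=1+3+1+0=5$, agreeing with the example $S_2(3)=\{000,101,001,100,010\}$ in the text.
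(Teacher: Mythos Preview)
Your argument is correct, but it follows a genuinely different route from the paper's own proof. The paper proceeds by induction: it first checks that $1+\sum_{k=1}^{r-1}\sum_{x=k}^{n}F_{n}(x,k)$ satisfies the $r$ initial conditions $T_{r}(s)=2^{s}$ for $s=1,\ldots,r-1$ and $T_{r}(r)=2^{r}-1$, and then, using the recurrence of Theorem~\ref{thm:main:recurrence} for each $F_{n}(x,k)$, verifies that the expression obeys the same linear recurrence $T_{r}(n)=\sum_{i=1}^{r}T_{r}(n-i)$ as $T_{r}(n)$. Your proof instead gives a one-line bijection: bit complementation identifies $S_{r}(n)$ with the set of strings of length $n$ whose longest zero-run has length at most $r-1$, and this set is tautologically the disjoint union $\bigcup_{k=0}^{r-1}\bigcup_{x}B_{n}^{x,k}$, yielding the formula directly after separating out the $k=0$ term. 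Your approach is strictly more elementary (it needs neither Theorem~\ref{thm:main:recurrence} nor Nyblom's recurrence for $T_{r}(n)$), while the paper's approach has the virtue of exhibiting Theorem~\ref{thm:main:recurrence} in action and tying the formula to the known $r$-Fibonacci recurrence.
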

 \begin{proof} It is easy to see that initial conditions follow from the 
Equation~(\ref{eq:Bn=Bnxk}).  For $s=1,2,\ldots,r-1$,
\begin{eqnarray*}
 T_{r}(s)&=& 1+\sum\limits_{k=1}^{r-1}\sum\limits_{x=k}^{s} F_{s}(x,k)       
=1+\sum\limits_{k=1}^{s}\sum\limits_{x=k}^{s} 
F_{s}(x,k)= \sum\limits_{x=0}^s\sum\limits_{k=0}^x F_s(x,k)=2^s.
\end{eqnarray*}
 
 For $s=r$, \begin{eqnarray*}
T_{r}(r)&=& 1+\sum\limits_{k=1}^{r-1}\sum\limits_{x=k}^{r} 
F_{r}(x,k)=\left(\sum\limits_{x=0}^r\sum\limits_{k=0}^x 
F_r(x,k)\right)-F_{r}(r,r)=2^r-1.             
            \end{eqnarray*}
From  Theorem~\ref{thm:main:recurrence}, we have 
$F_n(x,k)=\sum\limits_{i=0}^{k-1}F_{n-i-1}(x-i,k)+\sum\limits_{j=0}^{k}F_{n-k-1}
(x-k,j)$.\\ Applying this to $F_n(x,1),F_n(x,2),\ldots, F_n(x,r-1),$ we get
\begin{eqnarray*}
F_n(x,1) &=&  F_{n-1}(x,1)+F_{n-2}(x-1,0)+F_{n-2}(x-1,1)\\
F_n(x,2) &=& F_{n-1}(x,2)+F_{n-2}(x-1,2)+F_{n-3}(x-2,0)+F_{n-3}(x-2,
1)+F_{n-3}(x-2,2)\\\nonumber
\vdots\\\nonumber
F_n(x,r-1) &=&F_{n-1}(x,r-1)+F_{n-2}(x-1,r-1)+\dots + 
F_{n-r+1}(x-r+2,r-1)\\&&+F_{n-r}(x-r+1,0)+F_{n-r}(x-r+1,1)+\dots 
+F_{n-r}(x-r+1,r-1).
\end{eqnarray*}
In order to show $T_r(n)=1+\sum\limits_{k=1}^{r-1}\sum\limits_{x= 
k}^{n}F_{n}(x,k)$ for $n>r$, it is sufficient to show that  
 $1+\sum\limits_{k=1}^{r-1}\sum\limits_{x= 
k}^{n}F_{n}(x,k)$ satisfies the recurrence relation  
$T_{r}(n)=\sum\limits_{i=1}^{r}T_{r}(n-i).$
\begin{eqnarray}\label{eq:thm4.1}
 1+\sum\limits_{k=1}^{r-1}\sum\limits_{x=k}^{n}F_n(x,k) 
&=&1+\sum\limits_{x=1}^{n}F_{n}(x,1)+\sum\limits_{x=2}^{n}F_{n}(x,
2)+\dots+\sum\limits_{x=r-1}^{n}F_{n}(x,r-1)
\end{eqnarray}
Now by substituting the values of  $F_n(x,1),F_n(x,2),\ldots, F_n(x,r-1)$, and 
by using the fact that 
$\sum\limits_{x=k}^{n}F_{n-i}(x-t,k)=\sum\limits_{x=k}^{n-i} 
F_{n-i}(x,k)\;\;for\; k \geq t \;\;and \;i \geq k$,  we 
can rewrite Equation~(\ref{eq:thm4.1}) as
\begin{eqnarray*}
 1+\sum\limits_{k=1}^{r-1}\sum\limits_{x=k}^{n}F_n(x,k) 
&=&\left(1+\sum\limits_{k=1}^{r-1}\sum\limits_{x=k}^{n-1}
F_{n-1}(x,k)\right)+\left(1+\sum\limits_{k=1}^{r-1}\sum\limits_{x=k}^{n-2}F_{n-2
}(x,k) \right)
+\dots+\\
&&\left(1+\sum\limits_{k=1}^{r-1}\sum\limits_{x=k}^{n-r}F_{n-r}(x,
k)\right).
\end{eqnarray*}
Hence, the result follows.
 \end{proof}
 Let  $O_{n}(x,k)$ denote  the total number 
of ones in  $B_{n}^{x,k}$ and $O_{n}(r)$ as the total number of zeros in 
$S_{r}(n)$, for example $O_{2}(3)=10$. It is easy to see that 
$O_{n}(x,k)=(n-x)F_{n}(x,k).$ For $n > r,$ M. A. Nyblom in~\cite{NY1} 
proved  that \begin{equation*}
O_{r}(n)=\sum\limits_{i=1}^{r}O_{r}(n-i)+T_{r}(n) \nonumber
\end{equation*}
with $r$ initial conditions $O_{r}(s)=s2^{s-1}$ for $s=1,2,\ldots,r.$

\begin{theorem}\label{thm:ONR=ONXK}
 Let $n,r\in 
\N$ and $r\ge 2.$ Then 
$O_{r}(n)=\sum\limits_{k=0}^{r-1}\sum\limits_{x=k}^{n}O_{n}(x,k).$
\end{theorem}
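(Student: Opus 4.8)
The plan is to mirror the proof of Theorem~\ref{thm:Trn=Fnxk} almost verbatim, replacing the bare count $F_n(x,k)$ by the weighted count $O_n(x,k)=(n-x)F_n(x,k)$ and keeping track of the extra linear term that appears when one "shifts" a string by prepending bits. First I would observe that the claimed identity is just the statement $O_r(n)=\sum_{k=0}^{r-1}\sum_{x=k}^n O_n(x,k)$, i.e. the total number of zeros over all strings in $S_r(n)$ equals the total number of zeros summed over all the blocks $B_n^{x,k}$ with $0\le k\le r-1$; this is transparent from the set decomposition $S_r(n)=\bigcup_{k=0}^{r-1}\bigcup_{x=k}^{n}B_n^{x,k}$ (a disjoint union), together with the fact that a string with $x$ zeros contributes exactly $x$ zeros, and $O_n(x,k)$ is by definition that contribution aggregated over $B_n^{x,k}$. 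Equivalently $O_r(n)=\sum_{k=0}^{r-1}\sum_{x=k}^{n} x\,F_n(x,k)$; but since $O_n(x,k)=(n-x)F_n(x,k)$ counts ones rather than zeros, I must be careful: the right object is "zeros in $S_r(n)$", and in $B_n^{x,k}$ the number of zeros is $x$, so I should double-check that the paper's $O_n(x,k)$ as used in the statement is consistent — if it is the "ones" quantity then the stated theorem is actually counting something via the complementary symmetry $x\leftrightarrow n-x$, and I would note that $S_r(n)$ is closed under complementation only in the sense that relates runs of ones to runs of zeros, so the bookkeeping is: zeros of $S_r(n)=\sum_{k=0}^{r-1}\sum_x x F_n(x,k)$, and I will verify this equals $\sum_{k=0}^{r-1}\sum_x (n-x)F_n(x,k)$ is \emph{false} in general, hence the correct reading must be that $O_n(x,k)$ here denotes zeros, matching the sentence "total number of zeros in $S_r(n)$"; I will adopt that reading.

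Granting that, the base cases $s=1,\dots,r$ are immediate: for $s\le r$ every string of length $s$ lies in $S_r(s)$ except, when $s=r$, the all-zeros string, so $\sum_{k=0}^{r-1}\sum_{x=k}^{s}O_s(x,k)$ equals the total number of zeros over all of $B_s$ minus (for $s=r$) the $r$ zeros of the string $0^r$, and the total number of zeros over all length-$s$ binary strings is $s2^{s-1}$, giving $s2^{s-1}$ for $s<r$ and $r2^{r-1}-r$ for $s=r$; wait — Nyblom's stated initial condition is $O_r(r)=r2^{r-1}$, so I would recheck: actually $0^r\notin S_r(r)$ costs $r$ zeros, so one gets $r2^{r-1}-r$, which forces me to recheck Nyblom's convention or the value $O_2(3)=10$ given as an example. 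With $r=2,n=3$: $S_2(3)=\{000,101,001,100,010\}$ has $3+1+2+2+2=10$ zeros, consistent with "zeros". And the recurrence to verify for $n>r$ is $O_r(n)=\sum_{i=1}^r O_r(n-i)+T_r(n)$. So the real content is: show $A_n:=\sum_{k=0}^{r-1}\sum_{x=k}^{n}O_n(x,k)$ satisfies $A_n=\sum_{i=1}^r A_{n-i}+T_r(n)$.

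The key step is to apply Theorem~\ref{thm:main:recurrence} to each $F_n(x,k)$ exactly as in the previous proof, then multiply through by the weight. Writing $O_n(x,k)=x F_n(x,k)$ and using $F_n(x,k)=\sum_{i=0}^{k-1}F_{n-i-1}(x-i,k)+\sum_{j=0}^{k}F_{n-k-1}(x-k,j)$, I get $x F_n(x,k)=\sum_{i=0}^{k-1}\big[(x-i)+i\big]F_{n-i-1}(x-i,k)+\sum_{j=0}^{k}\big[(x-k)+k\big]F_{n-k-1}(x-k,j)$, which splits into a "main" part $\sum_{i=0}^{k-1}O_{n-i-1}(x-i,k)+\sum_{j=0}^{k}O_{n-k-1}(x-k,j)$ — this assembles, after the same index-shifting identity $\sum_x O_{n-i}(x-t,k)=\sum_x O_{n-i}(x,k)$ used before, into $\sum_{i=1}^r A_{n-i}$ plus some constant correction from the "$1+$" terms — and an "error" part $\sum_{i=0}^{k-1} i\,F_{n-i-1}(x-i,k)+\sum_{j=0}^k k\,F_{n-k-1}(x-k,j)$, whose sum over all $k\le r-1$ and all valid $x$ must telescope to exactly $T_r(n)$. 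I expect this last accounting — proving that the accumulated weight-defect equals $T_r(n)=1+\sum_{k=1}^{r-1}\sum_{x=k}^n F_n(x,k)$ — to be the main obstacle; the cleanest way to handle it is to reorganize the double sum so that each string $y\in S_r(n)$, viewed via its unique decomposition $y=0^{a}1(\text{rest})$ or $y=1(\text{rest})$, contributes its zeros to $A_n$ and one sees directly that peeling off the first maximal block of the string accounts for one length-$(n-1)$ string's worth of zeros plus a $+1$ per string, and I would verify the bookkeeping on small cases ($r=2$, a few $n$) to pin the constants. Once the defect is identified with $T_r(n)$, the recurrence for $A_n$ follows and, with the base cases, Theorem~\ref{thm:ONR=ONXK} is proved.
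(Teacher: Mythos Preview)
Your plan has the right shape --- verify the base cases, then show the double sum satisfies Nyblom's recurrence by expanding each $F_n(x,k)$ via Theorem~\ref{thm:main:recurrence} --- and this is exactly what the paper does. But you have made a definitional error that breaks the computation.

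You write $S_r(n)=\bigcup_{k=0}^{r-1}\bigcup_{x=k}^{n}B_n^{x,k}$, and from this deduce that $O_n(x,k)$ must count zeros, i.e.\ $O_n(x,k)=xF_n(x,k)$. The set equality is false: $S_r(n)$ is the set of strings without $r$-runs of \emph{ones}, whereas $\bigcup_{k\le r-1}B_n^{x,k}$ is the set of strings without $r$-runs of \emph{zeros}. The two are in bijection via bit-complementation, not equal. Under that bijection zeros become ones, so the total number of zeros in $S_r(n)$ equals the total number of \emph{ones} in $\bigcup_{k\le r-1}B_n^{x,k}$, which is $\sum_{k=0}^{r-1}\sum_{x}(n-x)F_n(x,k)$. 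This is why the paper defines $O_n(x,k)=(n-x)F_n(x,k)$ and the theorem is stated with that quantity. (Your side remark about the excluded string at $s=r$ is likewise off: it is $1^r$, not $0^r$; since $1^r$ has no zeros, $O_r(r)=r\,2^{r-1}$ with nothing subtracted.)

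This is not a harmless relabeling: with your weight $x$, the recurrence you aim for fails. Setting $Z_n=\sum_{k=0}^{r-1}\sum_{x}xF_n(x,k)$ and $W_n=\sum_{k=0}^{r-1}\sum_{x}(n-x)F_n(x,k)$, one has $Z_n+W_n=nT_r(n)$ and $W_n=\sum_{i=1}^{r}W_{n-i}+T_r(n)$, whence $Z_n-\sum_{i=1}^{r}Z_{n-i}=\sum_{i=1}^{r}(i-1)T_r(n-i)$, which is not $T_r(n)$ (e.g.\ $r=2$: $Z_3-Z_2-Z_1=5-2-1=2=T_2(1)\neq T_2(3)=5$). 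So your ``error part'' cannot telescope to $T_r(n)$. With the correct weight $(n-x)$, the splitting is clean: on the term $F_{n-i-1}(x-i,k)$ the weight $(n-x)$ decomposes as $\big((n-i-1)-(x-i)\big)+1$; the first piece reassembles into $O_r(n-i)$ and the ``$+1$'' pieces sum to $\sum_{i=1}^{r}T_r(n-i)=T_r(n)$, which is precisely the paper's computation.
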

\begin{proof}
In order to show 
$O_{r}(n)=\sum\limits_{k=0}^{r-1}\sum\limits_{x=k}^{n}O_{n}(x,k),$ we will show for $n > r,$
$\sum\limits_{k=0}^{r-1}\sum\limits_{x=k}^{n}O_{n}(x,k)$ satisfies 
\begin{equation*}
O_{r}(n)=\sum\limits_{i=1}^{r}O_{r}(n-i)+T_{r}(n)  \nonumber
\end{equation*}
with $r$ initial conditions $O_{r}(s)=s2^{s-1}$ for $s=1,2,\ldots,r.$

It is easy to see that initial conditions follows from the 
Equation~(\ref{eq:ncx}).
For $s=1,2,\ldots,r$,  we have
 \begin{eqnarray*}
  O_{r}(s)&= & 0.F_{s}(0,0)+1.\sum\limits_{k=0}^{1} 
F_{s}(1,k)+2.\sum\limits_{k=0}^{2} F_{s}(2,k)+3.\sum\limits_{k=0}^{3} 
F_{s}(3,k)+\dots + s.\sum\limits_{k=0}^{s} F_{s}(s,k)\\
&=& 1.\binom{s}{1}+2.\binom{s}{2}+3.\binom{s}{3}+ \dots+ 
s.\binom{s}{s}=s.2^{s-1}.  
 \end{eqnarray*}

Now we use $O_{n}(x,k)=(n-x)F_{n}(x,k)$ and Theorem~\ref{thm:main:recurrence} 
to 
prove that $\sum\limits_{k=0}^{r-1}\sum\limits_{x=k}^{n}O_{n}(x,k)$ satisfies 
the recurrence 
relation $O_{r}(n)=\sum\limits_{i=1}^{r}O_{r}(n-i)+T_{r}(n)$ for $n > 
r$ which is similar to the  proof of Theorem~\ref{thm:Trn=Fnxk}.

\begin{eqnarray*}
\sum\limits_{k=0}^{r-1}\sum\limits_{x=k}^{n}O_{n}(x,k) &=& 
\sum\limits_{k=0}^{r-1}\sum\limits_{x=k}^{n}(n-x)F_n(x,k)\\ 
&=&\sum\limits_{k=0}^{r-1}\sum\limits_{x=k}^{n-1}(n-x)  
F_{n-1}(x,k) +\sum\limits_{k=0}^{r-1}\sum\limits_{x=k}^{n-2}(n-x-1) 
F_{n-2}(x,k))\\&& +\sum\limits_{k=0}^{r-1}\sum\limits_{x=k}^{n-3}
                                                       (n-x-2)F_{n-3}(x,k)) 
+\dots+\sum\limits_{k=1}^{r-1}\sum\limits_{x=k}^{n-r}                     
                                (n-x-(r-1)) F_{n-r}(x,k))\\        
&=&\sum\limits_{k=0}^{r-1}\sum\limits_{x=k}^{n-1}
                                                   (n-x-1)  
F_{n-1}(x,k)+\sum\limits_{k=0}^{r-1}\sum\limits_{x=k}^{n-2}
                                                      (n-x-2) F_{n-2}(x,k) 
+\dots+\\
&&\sum\limits_{k=1}^{r-1}\sum\limits_{x=k}^{n-r}
                                                      (n-x-r) F_{n-r}(x,k) 
+\sum\limits_{k=0}^{r-1}\sum\limits_{x=k}^{n-1}F_{n-1}
(x,k)+\sum\limits_{k=0}^{r-1}\sum\limits_{x=k}^{n-2}
                                                      F_{n-2}(x,k) 
+\\
&&\sum\limits_{k=0}^{r-1}\sum\limits_{x=k}^{n-3}
                                                       F_{n-3}(x,k) 
+\dots+\sum\limits_{k=1}^{r-1}\sum\limits_{x=k}^{n-r}
                                                       F_{n-r}(x,k) \\
&=& \sum\limits_{i=1}^{r}O_{r}(n-i)+T_{r}(n).
\end{eqnarray*}
Hence, the result follows.
\end{proof}
\subsection{Number of compositions and partitions  of $n+1$}\label{sec:Partitions}
A composition of a positive integer $n$ is a sequence $k_1,k_2,\ldots, k_r$ of 
positive 
integers called 
summands of the composition such that $n=k_1+k_2+\dots+k_r.$ It is well known 
 that 
the number of compositions of $n$ is $2^{n-1}$(see~\cite{Sills},\cite{AT}). There is a one to one 
correspondence between 
compositions of $n$ and binary sequences of length $n-1$ (refer 
Section~5 in ~\cite{GR2}).

Few interesting applications of column sums of $F_n$ are:

\begin{enumerate}
 \item the $k^{th}$ column 
sum of $F_n$ is the 
 number of compositions  of $n+1$  with maximum summand $k+1$. Consequently the 
total number of compositions of $n+1$ is 
$\e_{n+1}^TF_n\e_{n+1}=2^n.$ 
\item  Further, from the Equation~(\ref{eq:fn=Fnxk}),  the number of compositions of 
$n+1$ 
with ones and twos is 
 $(\e_{n+1}^TF_n)_1+(\e_{n+1}^TF_n)_2=1+(\e_{n+1}^TF_n)_2=f_n$, the same is 
derived by ~Krishnaswami Alladi and V.~E.~Hoggatt, Jr in ~\cite{K-H}.
\end{enumerate}
 S.~Heubach et al  ~\cite{H-C-G} counted the number of ``+" signs and number 
summands in all 
 compositions  of $n$. These results can be  expressed in terms of $F_n(x,k)$ 
in the 
following manner.
 \begin{enumerate}
  \item It is clear that the number of ``+" signs in a 
composition of $n$ is the same as the number of $1$'s in the corresponding binary 
string of length $n-1$. Hence, the total number of ``+" signs in all 
compositions 
of $n$  is  
$$\sum\limits_{k=0}^{x}\sum\limits_{x=0}^{n-1} x \; 
F_{n-1}(x,k)=\sum\limits_{x=1}^{n-1}x \; \binom{n-1}{x}=(n-1)2^{n-2}.$$
\item The number of summands in a composition is one more than the number of 
``+'' 
signs, hence, the number of summands in all compositions of $n$ is given by  
$$(n-1)2^{n-2}+\sum\limits_{k=1}^{n-1}\sum\limits_{x=1}^{n-1}F_{n-1}(x,k)+F_{n-1
}(0,0)=(n+1)2^{n-2}.$$
 \end{enumerate}
Let $P_n$ denote the number of partitions of positive integer $n.$ Then it is 
easy to see that 
\begin{equation}\label{eq: P_{n+1}{x,k}}
P_{n+1}=\sum\limits_{(x,k)\in S_{n}}P_{n+1}(x,k),
\end{equation}
where $P_{n+1}(x,k)$ denotes the number of partitions of $n+1$ such that  
 the binary strings corresponding to partitions of $n+1$ belongs to  
$B_{n}^{x,k}$.

The Equation~(\ref{eq: P_{n+1}{x,k}}) can be derived by defining 
a  relation on $B_{n}^{x,k}$ as $X\sim 
Y$ if  the lengths  of subwords of zeros are the
same in both $X$ and $Y.$ Clearly, this is an equivalence relation and  $P_{n+1}(x,k)$ is equal to the
number of equivalence classes of this equivalence relation.
In other words, two elements $X,Y \in B_{n}^{x,k}$ correspond  to distinct partitions of 
$n+1$ if and only if  there exists a positive integer $ l>0 $ such that the number of subwords of zeros of 
length 
$l$ in $X$ and $Y$ are distinct.

\begin{ex}
\begin{enumerate}
\item
For $n=6$, $x=4$, $k=2$ we have $B_{6}^{4,2}=[X]\cup[Y]$
where $X=001001$, $Y=001010$ and
$$[X]=\{001001,001100,100100\}\;\;[Y]=\{001010,010010,010100 \}.$$
Hence, $P_{7}(4,2)=2.$
\item
For $n=10$, $x=7$,  $k=3$ we have $|B_{10}^{7,3}|=36$ and 
$B_{10}^{7,3}=[X]\cup[Y]\cup[Z]$\\
where $X=0001000101$, $Y=0001001001$, $Z=0001001010$ and 
\begin{eqnarray}
[X]=\{0001000101,0001000110,0001100010,0001010001,0001011000,0001101000, 
\nonumber \\
   1000100010,1000101000,0100010001,0100011000,0110001000,1010001000 \} 
\nonumber
\end{eqnarray}
\begin{eqnarray}
[Y]=\{0001001001,0001001100,0001100100,1000100100,0010001001,0010001100, 
\nonumber \\
      0011000100,0010010001,0010011000,0011000100,1001000100,1001001000  \} 
\nonumber
\end{eqnarray}
\begin{eqnarray}
[Z]=\{0001001010,0001010010,0001010100,0010001010,0100010010,
0100010100\nonumber 
\\
      0101000100,0010100010,0100100010,0101001000,0010101000,0100101000 \}. 
\nonumber
\end{eqnarray}
Hence, $P_{11}(7,3)=3.$
\end{enumerate}
\end{ex}

 From the Equation~(\ref{eq: P_{n+1}{x,k}}), we have $|S_n|\le P_{n+1}$, further 
it is easy 
 to check that $|S_n|=P_{n+1}$ holds only when $n\le 5.$ 
 Hence, in order to find $P_{n+1}$  it is sufficient to find $P_{n+1}(x,k)$ for 
all $x,k$ with 
 $0\leq k \leq x \leq n.$  
 
 Before stating the recurrence relation for 
$P_{n+1}(x,k)$, 
 it is easy to see that the following values of $P_{n+1}(x,k)$
 follow immediately from its definition whenever $F_n(x,k)>0$. 
 \begin{enumerate}
  \item  $P_{n+1}(x,x)= P_{n+1}(x,x-1)=P_{n+1}(x,1)=1.$  
 \item If $n = x+\; \lfloor \frac{x+1}{2} 
\rfloor+i-1,$ where $i\ge 0$, then 
 $$P_{n+1}(x,2) = 
 \begin{cases}
             i+1   & \mbox{if $0 \leq i < \lfloor \frac{x}{2} \rfloor -1$,}\\

\lfloor \frac{x}{2} \rfloor & \mbox{if $i \geq \lfloor \frac{x}{2} \rfloor -1 
$.}
               \end{cases}$$
\begin{proof}
Let $X\in B_{n}^{x,2}$ and $n_{00}$ denote the number of times the string
$00$ occurs as a subword  in $X$. Then $ n_{00} \le \lfloor \frac{x}{2} 
\rfloor$ and 
the minimum value of 
$n_{00}$ is either $\lfloor \frac{x}{2} \rfloor-i$ or $1$
depending on   $ i < \lfloor 
\frac{x}{2} \rfloor -1$ or $i \geq \lfloor \frac{x}{2} \rfloor -1$ respectively.
Hence, the result follows.
\end{proof}                
\item If $k|x$ and $n = x+\frac{x}{k}-1$, then $P_{n+1}(x,k) =1.$ 
\item If  $\lfloor \frac{n}{2} \rfloor \le k\le n-1,$ then  $P_{n+1}(n-1,k)=1.$ 
\end{enumerate}
For  $X\in B_n^{x,k},$ we denote $X_l$ as the  multiset of all lengths(with repetitions) of subwords of zeros in $X$.
Then $X\sim Y$ if and only if $X_l=Y_l$. Hence, we can define $[X]_l=X_l.$
If $[X]_l=\{l_1,l_2,\ldots,l_t\},$  then there is a bijection between $[X]$  and the set of all 
compositions of $n+1$  having  summands $l_{1}+1,l_{2}+1,\ldots,l_{t}+1$ and 
$(n-\sum\limits_{i=1}^{t}(l_{i}+1)+1)$ $1$'s. Further for each $i$, $1\le i\le P_{n+1}(x,k)$ there exists
 $Z_i\in [X_i]$ such that $Z_i=\underbrace{00\dots 0}_{\text{$k$ times}}1Y_i$ where 
 $Y_i\in B_{n-k-1}^{x-k,k_i}$ and $0\le \lfloor\frac{n-k-1}{n-x}\rfloor\le k_i\le k.$
 Hence, the following result.
 \begin{theorem}
 Let $n> x\ge k\ge 3.$ Then
 $P_{n+1}(x,k)=\sum\limits_{j=\left\lfloor\frac{n-k-1}{n-x}\right\rfloor}^{k}P_{n-k}(x-k,j).$
 \end{theorem}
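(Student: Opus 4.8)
The plan is to promote the bijective description stated in the paragraph preceding the theorem into an honest bijection of finite sets and then count fibres, in the spirit of the proof of Theorem~\ref{thm:main:recurrence}. Write $\sim$ for the equivalence relation on $B_n^{x,k}$ (two words are equivalent iff their multisets of zero-run-lengths agree), so $P_{n+1}(x,k)$ is the number of $\sim$-classes of $B_n^{x,k}$, and recall the notation $[X]_l$ for the common run-length multiset of a class $[X]$. First I would verify that every $\sim$-class $[X]$ of $B_n^{x,k}$ contains a representative of the form $Z=\underbrace{00\cdots0}_{k}1\,Y$ with $Y$ a word of length $n-k-1$: since the longest zero-run of $X$ has length exactly $k$, the multiset $[X]_l=\{k,l_1,\dots,l_{p-1}\}$ contains a copy of $k$, and the explicit word $\underbrace{0\cdots0}_{k}1\,\underbrace{0\cdots0}_{l_1}1\cdots1\,\underbrace{0\cdots0}_{l_{p-1}}\underbrace{1\cdots1}_{(n-x)-(p-1)}$ has exactly $x$ zeros and $n-x$ ones (the trailing block of ones has nonnegative length because $X$, having $p$ runs of zeros, needs at least $p-1$ separating ones, whence $n-x\ge p-1$) and is $\sim$-equivalent to $X$. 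Reading off its tail yields $Y\in B_{n-k-1}^{x-k,j}$, where $j$ is the longest zero-run of $Y$ (with $j=0$ if $Y$ has no zeros, which happens exactly when $x=k$).

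Next I would set $\Phi([X])=[Y]$ and check it is well defined and bijective. For well-definedness: since $Z\in[X]$ we have the multiset identity $[X]_l=\{k\}\uplus[Y]_l$, so $[Y]_l=[X]_l\setminus\{k\}$ (delete one copy of $k$) depends on $[X]$ alone; in particular $j=\max\bigl([X]_l\setminus\{k\}\bigr)\le k$ and $Y$ has $x-k$ zeros in a word of length $n-k-1$, so $[Y]$ is genuinely a $\sim$-class of $B_{n-k-1}^{x-k,j}$ with $0\le j\le k$. The inverse sends a $\sim$-class $[Y]$ of $B_{n-k-1}^{x-k,j}$ with $j\le k$ to the $\sim$-class of $\underbrace{00\cdots0}_{k}1\,Y$, which lies in $B_n^{x,k}$ because that word has length $k+1+(n-k-1)=n$, exactly $k+(x-k)=x$ zeros, and longest zero-run $\max(k,j)=k$. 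The two maps are mutually inverse since in each direction the run-length multiset of the image is obtained from that of the argument by inserting, respectively deleting, one copy of $k$. Hence $\Phi$ is a bijection from the $\sim$-classes of $B_n^{x,k}$ onto the disjoint union over all $j$ with $0\le j\le k$ of the sets of $\sim$-classes of $B_{n-k-1}^{x-k,j}$; note this also disposes of the degenerate case $B_n^{x,k}=\emptyset$, in which it forces every $B_{n-k-1}^{x-k,j}$ with $j\le k$ to be empty as well, so both sides of the asserted identity vanish.

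Finally I would pin down the index range. By definition the number of $\sim$-classes of $B_{n-k-1}^{x-k,j}=B_{(n-k)-1}^{x-k,j}$ is $P_{n-k}(x-k,j)$, which is $0$ whenever $B_{n-k-1}^{x-k,j}=\emptyset$; and by Corollary~\ref{cor:boundonk}, applied with parameters $n-k-1$ and $x-k$, the least $j$ for which this set is nonempty is $\bigl\lfloor\frac{n-k-1}{(n-k-1)-(x-k)+1}\bigr\rfloor=\bigl\lfloor\frac{n-k-1}{n-x}\bigr\rfloor$ (here $n-x\ge1$, so the denominator is legitimate), while $j\le k$ always holds. Counting the $\sim$-classes of $B_n^{x,k}$ through the fibres of $\Phi$ therefore gives $P_{n+1}(x,k)=\sum\limits_{j=\left\lfloor\frac{n-k-1}{n-x}\right\rfloor}^{k}P_{n-k}(x-k,j)$. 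I expect the only point genuinely requiring care to be the construction of the canonical representative $\underbrace{00\cdots0}_{k}1\,Y$ — that is, checking that after pulling one length-$k$ run to the front there are still enough $1$'s to separate the remaining zero-runs, which amounts to the inequality $n-x\ge p-1$ — with everything else reducing to multiset bookkeeping and an appeal to Corollary~\ref{cor:boundonk}.
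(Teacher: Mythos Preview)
Your argument is correct and follows essentially the same route as the paper: the paper's proof is the short paragraph immediately preceding the theorem, which asserts (without further justification) that each $\sim$-class of $B_n^{x,k}$ has a representative $Z_i=\underbrace{00\cdots 0}_{k}1\,Y_i$ with $Y_i\in B_{n-k-1}^{x-k,k_i}$ and $\lfloor (n-k-1)/(n-x)\rfloor\le k_i\le k$, and then declares the theorem to follow. Your proposal is a fleshed-out version of exactly this idea --- you build the canonical representative explicitly, verify the inequality $n-x\ge p-1$ needed for it to exist, and check that $[X]\mapsto[Y]$ is a well-defined bijection via the multiset identity $[X]_l=\{k\}\uplus[Y]_l$ --- so it is more careful than the paper but not a different approach.
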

\section{Palindromic Binary words}\label{section:PBW}

Let $\hat B_n$ denote the set of all palindromic binary 
strings of 
length $n$ and  $\hat B_n^{x,k}$ denote the 
set of all palindromic binary strings of length $n$ with $x$ zeros and having 
at 
least 
one  largest subword of zeros of length $k.$ For example, 
$$\hat B_6^{4,2}=\{010010,001100\}.$$ 
Consequently, a  necessary condition for  $\hat B_n^{x,k}$ to be   nonempty is 
$n\ge 
x\ge 
k\ge 0.$  An immediate observation is  $\hat B_n=\cup_{x=0}^n\cup_{k=0}^x 
\hat B_n^{x,k}. $ If $|S|$ denotes the cardinality of the set $S$, then we have 
\begin{equation}\label{eq:Bn=Bnxk1}
 2^{\lfloor \frac{n+1}{2}\rfloor}=\sum\limits_{x=0}^n\sum\limits_{k=0}^x \hat 
F_n(x,k),
\end{equation}
where $\hat F_n(x,k)=|\hat B_n^{x,k}|.$ The value of $\hat F_n(x,k)$ is defined 
to be zero 
whenever $n < 0.$ Many counting problems on binary strings can 
be expressed in terms of $\hat F_n(x,k).$ For example,
\begin{enumerate}
 \item  the number of palindromic binary strings of length 
$n$ with $x$ zeros is $\binom{\lfloor \frac{n}{2}\rfloor}{\lfloor 
\frac{x}{2}\rfloor}.$ Hence, we have 
\begin{equation}\label{eq:ncx1}
 \binom{\lfloor \frac{n}{2}\rfloor}{\lfloor 
\frac{x}{2}\rfloor}=\sum\limits_{k=0}^x \hat F_n(x,k).
\end{equation}
\item the number of palindromic binary strings of length $n $ with at least $r$ 
consecutive  zeros is equal to $\sum\limits_{x=r}^n\sum\limits_{k=r}^x \hat 
F_n(x,k).$
\item  the number of palindromic binary strings  
of  length $2n$ and $2n-1$ with no consecutive zeros are respectively
\begin{equation}\label{eq:an=Fnxk}
f'_{n-1}=1+\sum\limits_{i=1}^{n} \hat F_{2n}(2i,1), \; {\text {and}} 
\end{equation}
\begin{equation}
f_n=1+\sum\limits_{x=1}^{2n-1} \hat F_{2n-1}(x,1).\nonumber
\end{equation}
\end{enumerate}
We will see that $f'_{n-1}$ and $f_{n}$ are nothing but $a_{n-1}$ and 
$a_{n}$,  where $a_{n}$ is the $n^{th}$ Fibonacci number, that is
$a_n=a_{n-1}+a_{n-2}$, $n\ge 3$ with  $a_1=2, a_2=3.$ 

In Section~\ref{sec:PFNXK>0}, we will address the problem of  finding all 
$(x,k)$ such that $\hat B_n^{x,k}\ne \emptyset.$ We also count the number of 
such  sets. In Section~\ref{sec:PFormula}, we will give a recurrence  formula for 
$\hat 
F_n(x,k).$  Some results on compositions of $n$ by Alladi and Hoggatt
~\cite{K-H} are  generalized in ~\cite{H-C-G}, in Section~\ref{sec:PApplications} we will show all those results can be 
 expressed in terms of 
$\hat F_{n}(x,k)$.
We will use the same notations  as mentioned  above throughout the paper and 
without (hat)  \;\;  
\^ \;\; to denote the corresponding quantities for binary strings without 
palindromic restriction.

\subsection{Finding all $(x,k)$ with \^F$_n(x,k)>0.$}\label{sec:PFNXK>0}
The condition that  $n\ge x\ge k\ge 0$ for $\hat F_n(x,k)>0$  is necessary but 
not 
sufficient. For example, for $n=5$,  $\hat B_5^{x,k}\ne \emptyset$ if and only 
if 
$$(x,k)\in \{(0,0), (1,1),(3,3),(5,5), 
(2,1),(3,1),(4,2)\}.$$ Also note that if $k=0$, then $x=0$ and 
$\underbrace{11\cdots 111}_{\text{$n$ times}}$ is the only element in  
$\hat B_n^{0,0}.$ The following result 
establishes  upper bound on $x$  (or equivalently lower bound for $n-x$) for a 
given $n,k\in \N$ such that $\hat F_n(x,k)>0.$ For example, if $n=5$ and $k=1$, then  the maximum value $x$ can  assume  is $3.$

In this section, we find those $x,\;k$ for which \^{F}$_{n}(x,k)>0$ and 
then will derive a recurrence formula for \^{F}$_{n}(x,k)$. But before that, we 
explore the results for \^{F}$_{n}(x,k)$, which follow immediately from the 
definition of \^{F}$_{n}(x,k)$.
\begin{obs}\label{newww}
\begin{enumerate} 
\item\label{n even}
Suppose $n \in \mathbb{N}$ is even and $x,k$ are positive integers such that 
\^{F}$_{n}(x,k)>0$, then $x$ is even. But for even $x$ with 
\^{F}$_{n}(x,k)>0$, it is not necessary that $n$ has to be even.
\begin{proof}
It follows from the fact that a palindromic binary string of an even length 
$2m$ can be constructed from a binary string of length $m$  by concatenating to left with its mirror image.
With  $x=10$ and   $k=4$,  we have  $0100001000010\in$ \^{B}$_{13}^{10,4}$ and  
$01000011000010 \in$ \^{B}$_{14}^{10,4}.$
\end{proof}
\item For each  $n \in \mathbb{N}$, \^{F}$_{n}(0,0)=1$ and \^{F}$_{n}(n,n)=1$.\\
\item
For  $n \geq 2$, \^{F}$_{n}(n-1,k)=0$, for all $1 \leq k \leq n-1$ except 
\^F$_{n}(n-1,\frac{n-1}{2})=1$ whenever $n$ is odd.
\item \label{k half}
Suppose $n\ge 3.$ Then for each $x$, $1 
\leq x \leq n-2$ and corresponding  to each $k$, $\lfloor \frac{x}{2} \rfloor < 
k 
\leq x$, 
$$\hat {F}_{n}(x,k) =
 \begin{cases}
                \binom{\frac{n-k-2}{2}}{\frac{x-k}{2}}  & \mbox{when $x,k$ both 
has same parity as $n$,}\\
                 0 & \mbox{otherwise.}
               \end{cases}$$
\begin{proof}
Here $x=k+r$ and any X $\in$ \^B$_{n}(x,k)$ will be of the form \\
\\
\begin{tabular}{|l|l|l|l|l|l|l|l|l|l|l|l|l|l|l|l|l
|l}
\hline
& & & & 1&$\underbrace{0000}_{\mbox{$k-$ 
times}}$&1 & & & &\\
\hline
\end{tabular}\\
\ \\
which implies that $n, x$ has the same parity as of $k$ and conversely. And the number of 
such palindromic binary strings is given by the number of binary strings of 
length $\frac{n-k-2}{2}$ containing $\frac{x-k}{2}$ zeros.\\
Hence, the result follows.
\end{proof}
\end{enumerate}
\end{obs}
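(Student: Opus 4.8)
The plan is to use the hypothesis $\lfloor x/2 \rfloor < k$ to pin down the position of the longest run of zeros, and then read off the count from the palindromic symmetry, much as in the non-palindromic identity for $F_{n}(x,k)$ when $x<2k$ (these two hypotheses are equivalent). First I would note that $k \ge \lfloor x/2\rfloor + 1$ gives $x - k \le x - \lfloor x/2\rfloor - 1 = \lceil x/2\rceil - 1 < k$, so $x - k < k$. Hence if $X \in \hat B_n^{x,k}$ has maximal blocks of consecutive zeros of lengths $\ell_1 \ge \ell_2 \ge \cdots$, then $\ell_1 = k$ by definition of $\hat B_n^{x,k}$, and if a second block exists then $\ell_1 + \ell_2 \le x$ forces $\ell_2 \le x - k < k$. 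So $X$ has exactly one block of zeros of length $k$, and every other block (if any) is strictly shorter.

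Next I would invoke the palindromic structure. The reflection $\sigma\colon j \mapsto n+1-j$ permutes the maximal zero-blocks of $X$ preserving their lengths, so it must fix the unique length-$k$ block; that block is therefore centred, occupying positions $[a,b]$ with $a+b = n+1$ and $b - a = k-1$, i.e. $a = (n+2-k)/2$ and $b = (n+k)/2$. These are integers only when $n \equiv k \pmod 2$, so if $n \not\equiv k \pmod 2$ there is no such string and $\hat F_n(x,k) = 0$. When $n \equiv k \pmod 2$, the bounds $k \le x \le n-2$ give $a \ge 2$ and $b \le n-1$, so positions $a-1$ and $b+1$ exist and, by maximality of the block, both equal $1$; moreover $\sigma(a-1) = b+1$, so these two forced $1$'s are consistent with the palindrome. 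Consequently every such $X$ has the shape $L\,1\,\underbrace{0\cdots 0}_{k}\,1\,L^{\mathrm{rev}}$, where $L$ is the word on positions $1,\dots,a-2$, of length $a - 2 = (n-k-2)/2$, and $L^{\mathrm{rev}}$ is its reversal; conversely any word of this shape is a palindrome of length $n$.

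Then I would count. A word of the above shape has $k + 2\cdot(\text{number of zeros in }L)$ zeros in total, so membership in $\hat B_n^{x,k}$ forces $L$ to contain $(x-k)/2$ zeros; in particular $x \equiv k \pmod 2$ is necessary, and otherwise $\hat F_n(x,k) = 0$ again. There is no further restriction on $L$: every block of zeros of $X$ other than the central one lies entirely inside $L$ or inside $L^{\mathrm{rev}}$ (the two forced $1$'s separate it from the centre), hence has length at most $(x-k)/2 < k/2 < k$, so $k$ remains the longest block length. Thus, when $x \equiv k \equiv n \pmod 2$, the assignment $X \mapsto L$ is a bijection from $\hat B_n^{x,k}$ onto the binary words of length $(n-k-2)/2$ with $(x-k)/2$ zeros (note $x\le n-2$ guarantees $(x-k)/2\le (n-k-2)/2$), giving $\hat F_n(x,k) = \binom{(n-k-2)/2}{(x-k)/2}$; in all other cases $\hat F_n(x,k) = 0$.

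The main obstacle is the centring argument in the second paragraph: once one knows the longest zero-block is unique and hence reflection-invariant, its location, and therefore the parity conditions, are forced. The boundary checks $a \ge 2$ and $b \le n-1$, the parity bookkeeping for the two ``$0$ otherwise'' sub-cases, and the verification that $L$ is otherwise free are routine. This also makes transparent why the palindromic answer is a single binomial coefficient, in contrast with the two-term non-palindromic formula: the symmetry removes all choice in where to place the longest run.
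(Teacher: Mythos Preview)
Your argument is correct and follows the same route as the paper: show the length-$k$ block is unique (from $x-k<k$), use the palindrome symmetry to force it to be centred, and then count the free left half $L$. The paper's own proof is little more than a sketch asserting the central form; your version supplies the justification the paper omits, in particular the uniqueness-and-centring step and the verification that the remaining blocks in $L$ are automatically shorter than $k$.
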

From above observations, it is sufficient to find $\hat F_{n}(x,k)$ for  $n \geq 
3$,  $x\in \{2,3,\ldots,  n-2\}$ and 
$k\in \{1,2, \ldots, \lfloor \frac{x}{2} \rfloor\}$ further $x$ is even whenever 
$n$ is even. 
Before stating next result, recall $B_{n}^{x,k}$ denotes the 
set of all binary strings of length $n$ with $x$ zeros and having at least 
one  longest subword of zeros of length $k.$  

\begin{lemma}\label{lem:boundonx1}
Suppose $n,\;x,\;k \in \mathbb{N}$ and $q=\lfloor\frac{x}{k}\rfloor$. Then 
$$\hat F_{n}(x,k) > 
0\Leftrightarrow \begin{cases}
                x+q-1 \leq n  & \mbox{when \;\; $k|x,$ }\\
                x+q  \leq n  & \mbox{when \;\; $k \nmid x.$ }
               \end{cases}$$
\end{lemma}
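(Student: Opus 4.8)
The plan is to follow the two‑part structure of the proof of Lemma~\ref{lem:boundonx}: the forward implication ($\hat F_n(x,k)>0$ forces the inequality) is essentially free, and all the work is in the reverse (construction) implication, which is where the palindromic constraint bites. For the forward direction I would simply note that a palindromic binary string is a binary string, so $\hat B_n^{x,k}\subseteq B_n^{x,k}$ and hence $\hat F_n(x,k)\le F_n(x,k)$; since $q=\lfloor x/k\rfloor$ equals $x/k$ when $k\mid x$, the two displayed inequalities here coincide with the conditions of Lemma~\ref{lem:boundonx}, so $\hat F_n(x,k)>0$ gives $F_n(x,k)>0$ and the inequality follows. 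Equivalently, one can repeat the packing argument of Lemma~\ref{lem:boundonx} verbatim on any $X\in\hat B_n^{x,k}$: its $x$ zeros form at most $q$ maximal runs if $k\mid x$ and at most $q+1$ if $k\nmid x$, each of length at most $k$ and consecutive ones separated by a $1$, forcing $n-x\ge q-1$ or $n-x\ge q$.

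For the reverse direction I would produce an explicit palindrome in $\hat B_n^{x,k}$, built outward from its centre as in the proof of Observation~\ref{newww}, splitting on the parity of $n$ (and, correspondingly, of $x$, which must agree with $n$ when $n$ is even). Since the regime $k>\lfloor x/2\rfloor$ is already governed by Observation~\ref{newww}, I may assume $q\ge 2$, so $x\ge qk\ge 2k$, which is exactly what leaves enough zeros to lay down a symmetric pattern. The skeleton is a symmetric chain of maximal zero‑runs: one distinguished run of length exactly $k$, together with further runs of length at most $k$ accounting for the remaining zeros, all separated by single $1$'s, with any surplus $1$'s padded symmetrically at the two ends. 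When $n$ is odd I would place $0^k$ symmetrically about the central letter if $k$ is odd, and otherwise place a mirrored pair of runs straddling a central $1$; I would then flank this by a reflected copy of the pattern $\cdots 0^k1\,0^k1$ used in Lemma~\ref{lem:boundonx}, adjusting the innermost block(s) so that exactly $x$ zeros occur. The inequality $x+q-1\le n$ (resp.\ $x+q\le n$) is precisely what guarantees room for the $q-1$ (resp.\ $q$) separating $1$'s. When $n$ is even I would instead build a word $u$ of length $n/2$ by the analogous Lemma~\ref{lem:boundonx}-type construction with halved parameters and output $u^R u$, arranging that any zero‑run meeting the centre has length at most $\lfloor k/2\rfloor$ (or making the central letters $1$) so that folding never creates a run longer than $k$.

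The step I expect to be the main obstacle is the behaviour at the centre of the palindrome, since a zero‑run placed there merges with its mirror image: for $n$ even its visible length doubles, and for $n$ odd it becomes odd of the form $2j+1$. One must therefore choose the central configuration so that, after folding, the distinguished run still has length exactly $k$ and no run exceeds $k$; this is what forces the case analysis on the parity of $k$ relative to $n$ and requires the most care (and it is where the parity restrictions recorded in Observation~\ref{newww} are tacitly used). Once the centre is fixed, the remaining zeros and surplus $1$'s can be laid down symmetrically with no further constraint, and verifying the three requirements — total length $n$, exactly $x$ zeros, longest run exactly $k$ — is routine bookkeeping, parallel to the verification closing the proof of Lemma~\ref{lem:boundonx}.
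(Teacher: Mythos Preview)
Your plan is correct and follows essentially the same route as the paper: the forward implication via $\hat B_n^{x,k}\subseteq B_n^{x,k}$ and the packing bound of Lemma~\ref{lem:boundonx}, and the reverse implication via an explicit centred palindromic construction. The only cosmetic difference is that the paper organises the construction by the parity of $x$ (for $x$ even it reduces to $B_{\lfloor t/2\rfloor}^{x/2,k}\ne\emptyset$ and outputs $y\,1^{m}\,y'$; for $x$ odd it writes down explicit symmetric words built from blocks $Y=0^{k}1$ and their mirrors), whereas you organise it by the parity of $n$.
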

\begin{proof}
Suppose that \^F$_n(x,k)>0$ or equivalently $X \in$  \^B$_{n}^{x,k}$. Then, 
by definition, $X$ contains $x$ 0's and $n-x$ 1's, further $X$ does not 
contain any subword of  0's of length $k+1$ or more. Hence, the maximum number of 
subwords of zeros of length $k$ in $X$ is $q$, that is the minimum number of 1's is 
$q$ or $q-1$ depending on $k\nmid x$ or $k|x$ respectively.  Equivalently, 
$ q \leq n-x $ if $k\nmid x$, and $ q 
-1 \leq n-x$   if $k | x.$

Conversely, suppose that  $q \leq n-x$ if $k\nmid x$, 
and $q-1 \leq n-x$   if $k | x.$ We have to show that $\hat B_n^{x,k}\ne 
\emptyset$ 
 in each case.\\
If  $x$ is even, then  \^B$_{n}^{x,k} \ne \emptyset$ as 
$B_{\lfloor \frac{t}{2}\rfloor}^{\frac{x}{2},k} \ne \emptyset.$
In particular, depending $q$ is even or odd and  $k | x $ or $k \nmid x $
we have 
\;\; $y\underbrace{111\cdots 1}_{\text {$n-t$ times}}y'$ or 
 $y\underbrace{111\cdots 1}_{\text {$n-t+1$ times}}y'\in \hat B_n^{x,k},$ 
where  $t= x+q$ or $x+q-1$ according as $k \nmid 
x$ or $k|x$ respectively and $y \in B_{\lfloor 
\frac{t}{2}\rfloor}^{\frac{x}{2},k}$, $y'$ is the mirror image of $y$.

If $x$ is odd, then we have the following cases.
\begin{description}
 \item[$k\nmid x$]
We have \^B$_{n}^{x,k} \ne \emptyset$ as $B_{ 
\frac{n-r-2}{2}}^{\frac{x-r}{2},k} \ne \emptyset.$  In particular,  
$y1\underbrace{000\cdots 0}_{\text {$r$ times}}1y' \in B_n^{x,k},$
 $$\underbrace{111\cdots1}_{\text{$\frac{n-x- 
q-1}{2}$ }}\;\;\underbrace{000\cdots0}_{\text{$\frac{r}{2}$ 
}}1\underbrace{YY\cdots YY}_{\mbox{$\frac{q-1}{2}$ 
}}\underbrace{000\cdots 0}_{\text{$k$ }}\underbrace{Y'Y'\cdots 
Y'Y'}_{\mbox{$\frac{q-1}{2}$ 
}}1\underbrace{000\cdots 0}_{\text{$\frac{r}{2}$ }}\;\; 
\underbrace{111\cdots 1}_{\text{$\frac{n-x- 
q-1}{2}$ }}\in B_n^{x,k}$$ depending on $r$ is odd and even, respectively.
\item[$k|x$] In this case, 
$$\underbrace{111\cdots 1}_{\text{$\frac{n-x- 
q+1}{2}$ times}}\;\;\underbrace{YY\cdots YY}_{\mbox{$\frac{q-1}{2}$ 
times}}\underbrace{000\cdots0}_{\text{$k$ times}}\underbrace{Y'Y'\cdots 
Y'Y'}_{\mbox{$\frac{q-1}{2}$ 
times}} \underbrace{111\cdots 1}_{\text{$\frac{n-x- 
q+1}{2}$ times}}\in B_n^{x,k},$$
\end{description}
where $Y = \underbrace{000\cdots 0}_{\text{k-zeros}}1$, is  a subword
of $X \in B_{n}^{x,k}$ of length $k+1$ containing $k$ consecutive 0's followed 
by a  $1$   and  $Y'$ is the mirror image of $Y$.
\end{proof}
In the above lemma, we found an upper bound for  $x$  whenever $n$ and $k$ are known. 
Since \^F$_n(x,x)>0$ for all  $0\le x\le n$ and $2|x+n$, hence,  for a given $n$ and $x$ the 
maximum value for $k$ such that \^F$_n(x,k)>0$ and $2|x+n$ is $x.$ Whereas  when $n$ is odd and $x$ is even,
 the maximum value of $k$ is 
$\frac{x}{2}$. 
For  a fixed  $n$ and $x$, 
the following result will provide the least  value of $k$ such that 
\^F$_n(x,k)>0$. 
\begin{cor}\label{Least k}
 Let $n,x\in \N$ be fixed and \^F$_n(x,k)>0$. Then the smallest  value of  $k$ 
is
$\lfloor 
\frac{n}{n-x+1} \rfloor.$
\end{cor}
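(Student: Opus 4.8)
The plan is to deduce this from Lemma~\ref{lem:boundonx1} in exactly the way Corollary~\ref{cor:boundonk} is obtained from Lemma~\ref{lem:boundonx}, since the divisibility conditions in the two lemmas have identical shape. Write $m = n-x+1$ (so $m\geq 1$ because $n\geq x$) and set $k^{*} = \lfloor \frac{n}{n-x+1}\rfloor = \lfloor \frac{n}{m}\rfloor$. Using $n = x+m-1$ gives the key rewriting $\frac{n}{m} = \frac{x-1}{m}+1$, hence $k^{*} = \lfloor \frac{x-1}{m}\rfloor + 1$; this is the identity that makes the floors cooperate. By Lemma~\ref{lem:boundonx1}, $\hat F_{n}(x,k)>0$ is equivalent to $\lfloor \frac{x}{k}\rfloor \leq m$ when $k\mid x$ and to $\lfloor \frac{x}{k}\rfloor \leq m-1$ when $k\nmid x$; call such a $k$ \emph{admissible}. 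The goal is then to show $k^{*}$ is the least admissible $k$.

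First I would record a monotonicity fact: if $k$ is admissible then so is $k+1$. Indeed, if $k\mid x$ then $x\leq km < (k+1)m$, and if $k\nmid x$ then $x\leq km-1 < (k+1)m$; in either case $x<(k+1)m$, so $\lfloor \frac{x}{k+1}\rfloor \leq m-1$, which certifies admissibility of $k+1$ whether or not $k+1\mid x$. Thus the set of admissible $k$ is an up-set in $\{1,\dots,x\}$, and it is nonempty since $k=x$ is admissible ($x/x=1\leq m$). Consequently it suffices to prove that $k^{*}$ is admissible while $k^{*}-1$ is not.

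For admissibility of $k^{*}$: from $k^{*} = \lfloor \frac{x-1}{m}\rfloor + 1$ we get $(k^{*}-1)m\leq x-1$, i.e. $x\leq k^{*}m$. If $k^{*}\mid x$ this already gives $\lfloor \frac{x}{k^{*}}\rfloor = \frac{x}{k^{*}}\leq m$; if $k^{*}\nmid x$ then $x\neq k^{*}m$, so $x<k^{*}m$ and $\lfloor \frac{x}{k^{*}}\rfloor \leq m-1$. Either way $k^{*}$ is admissible. For inadmissibility of $k^{*}-1$ (when $k^{*}\geq 2$; if $k^{*}\leq 1$ there is nothing to check), put $k = k^{*}-1 = \lfloor \frac{x-1}{m}\rfloor$, so $km\leq x-1$, i.e. $x\geq km+1 > km$; this forces $\lfloor \frac{x}{k}\rfloor \geq m$ and $x>km$, which violates the admissibility condition in both the $k\mid x$ and $k\nmid x$ cases. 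Combined with the up-set property, $k^{*}=\lfloor \frac{n}{n-x+1}\rfloor$ is the least admissible $k$, which is the assertion.

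The routine obstacle is just the divisibility bookkeeping: keeping the two cases $k\mid x$ and $k\nmid x$ straight through the floor manipulations and treating the degenerate values ($x=0$, $x=n$, $k^{*}=1$) separately. One further point deserves a remark: when $k^{*}$ falls in the range $k>\lfloor \frac{x}{2}\rfloor$, positivity of $\hat F_{n}(x,k)$ also carries the parity constraints of Observation~\ref{newww}; one should check — it is immediate from the explicit formula there — that $k^{*}$ still respects those, so that in those boundary cases the value $\lfloor \frac{n}{n-x+1}\rfloor$ is genuinely attained and the statement is not vacuous.
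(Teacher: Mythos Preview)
Your argument is correct and matches the paper's approach: the paper gives no explicit proof of this corollary (nor of the parallel Corollary~\ref{cor:boundonk}), treating both as immediate from the preceding lemma, and you have simply supplied those details. One small slip to fix: in the admissibility step you write ``$(k^{*}-1)m\leq x-1$, i.e.\ $x\leq k^{*}m$,'' but the first inequality does not rearrange to the second; what you need there is the \emph{other} half of the floor definition, namely $x-1<k^{*}m$, which indeed gives $x\leq k^{*}m$ (the inequality $(k^{*}-1)m\leq x-1$ is the one you correctly use later for inadmissibility of $k^{*}-1$). The parity caveat in your final paragraph is not needed once Lemma~\ref{lem:boundonx1} is taken at face value as a full equivalence.
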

\begin{theorem}\label{thm: |S_pn|}
Let $n \geq 2$ and \^S$_{n}$=\{(x,k):\^F$_{n}(x,k)>0\}$. Then when $n$ is even,
$$|\hat S_{n}|= 
\begin{cases}
1+\frac{n(n+2)}{8}-\sum\limits_{i=1}^{\frac{n}{2}}\left\lfloor \frac{n}{2i+1}\right\rfloor 
+\frac{n^2}{16}   & \mbox{when\;$4\mid n,$}\\
1+\frac{n(n+2)}{8}-\sum\limits_{i=1}^{\frac{n}{2}}\left\lfloor \frac{n}{2i+1}\right\rfloor 
+\frac{n^{2}-4}{16}   & \mbox{when\;$4 \nmid n.$}\\
\end{cases}$$
When $n$ is odd, 
$$|\hat S_{n}|= 
\begin{cases}
1+\frac{5(n+3)(n-1)}{16}-\sum\limits_{i=1}^{n-2}\left\lfloor \frac{n}{i+1}\right\rfloor    
& \mbox{when\;$4 \mid (n-1),$}\\
1+\frac{(n+3)(n-1)}{4}-\sum\limits_{i=1}^{n-2}\left\lfloor \frac{n}{i+1}\right\rfloor 
+\frac{(n+1)^{2}}{16}   & \mbox{when\;$4 \mid (n+1).$}\\
\end{cases}$$
\end{theorem}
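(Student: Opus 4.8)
The plan is to compute $|\hat S_n|$ by sorting the pairs $(x,k)\in\hat S_n$ according to the first coordinate $x$ and counting, for each admissible $x$, the number $\nu(x)$ of $k$ with $\hat F_n(x,k)>0$. The extreme values of $x$ are handled directly by Observation~\ref{newww}: $x=0$ and $x=n$ each contribute one pair, $(0,0)$ and $(n,n)$; for odd $n$, $x=1$ contributes $(1,1)$ and $x=n-1$ contributes $(n-1,\tfrac{n-1}{2})$, whereas for even $n$ these two values of $x$ are odd and contribute nothing --- in fact, by Observation~\ref{newww}(\ref{n even}), only even $x$ occur when $n$ is even. It therefore remains to evaluate $\nu(x)$ for $x$ in the interior range ($2\le x\le n-2$, restricted to even $x$ when $n$ is even) and to sum.

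Fix such an $x$ and split the admissible $k$ into the \emph{short} ones, $1\le k\le\lfloor x/2\rfloor$, and the \emph{long} ones, $\lfloor x/2\rfloor<k\le x$. For the long $k$, Observation~\ref{newww}(\ref{k half}) gives $\hat F_n(x,k)>0$ precisely when $x$, $k$, $n$ share a common parity; so this part of $\nu(x)$ is $0$ unless $x\equiv n\pmod2$, and otherwise it is the number of $k\in(\lfloor x/2\rfloor,x]$ with $k\equiv n\pmod2$, which a brief computation identifies as $\lceil x/4\rceil$. For the short $k$, exactly as in the proof of Theorem~\ref{thm:|S_n|}, Corollary~\ref{Least k}, together with the monotonicity in $k$ of the criterion of Lemma~\ref{lem:boundonx1} (which reads $x+\lfloor x/k\rfloor\le n$ when $k\nmid x$ and $x+\lfloor x/k\rfloor-1\le n$ when $k\mid x$), shows that the admissible short $k$ form the integer interval $\bigl[\bigl\lfloor\tfrac{n}{n-x+1}\bigr\rfloor,\ \lfloor x/2\rfloor\bigr]$, so they number $\lfloor x/2\rfloor-\bigl\lfloor\tfrac{n}{n-x+1}\bigr\rfloor+1$ --- provided this interval is nonempty. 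Nonemptiness is the inequality $\bigl\lfloor\tfrac{n}{n-x+1}\bigr\rfloor\le\lfloor x/2\rfloor$, which reduces to $2n<(x+c)(n-x+1)$ with $c\in\{1,2\}$ depending on the parity of $x$; the right-hand side is a concave quadratic in $x$, so its minimum over $2\le x\le n-2$ occurs at an endpoint, where the inequality is verified directly using $n\ge3$.

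Combining, $|\hat S_n|$ equals the boundary term $2$ (from $x=0$ and $x=n$) plus
$$\sum_{x}\Bigl[\lfloor x/2\rfloor-\bigl\lfloor\tfrac{n}{n-x+1}\bigr\rfloor+1+\varepsilon(x)\Bigr],$$
where $\varepsilon(x)=\lceil x/4\rceil$ if $x\equiv n\pmod2$ and $\varepsilon(x)=0$ otherwise; the sum runs over $1\le x\le n-1$ when $n$ is odd --- at $x=n-1$, where Observation~\ref{newww}(\ref{k half}) does not apply, one uses instead its part giving $\hat F_n(n-1,k)=0$ for $k\neq\tfrac{n-1}{2}$ --- and over the even $x$ with $2\le x\le n-2$ when $n$ is even. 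The substitution $d=n-x+1$ converts $\sum_x\bigl\lfloor\tfrac{n}{n-x+1}\bigr\rfloor$ into $\sum_{i=1}^{n-2}\lfloor\tfrac{n}{i+1}\rfloor$ for odd $n$ and, since then $d$ runs through odd values only, into $\sum_{i=1}^{n/2}\lfloor\tfrac{n}{2i+1}\rfloor$ for even $n$, after the harmless padding by terms equal to $0$ that the statement uses. The remaining sums of $\lfloor x/2\rfloor$, of $1$'s, and of $\lceil x/4\rceil$ are evaluated in closed form; the $\lceil x/4\rceil$ sum reduces to one of the shape $\sum_{l=1}^{N}\lceil l/2\rceil$, which equals $M(M+1)$ when $N=2M$ and $(M+1)^2$ when $N=2M+1$, so its value depends on the residue of $\lfloor n/2\rfloor$ modulo $4$. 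This is precisely why the answer splits into the four cases $4\mid n$, $4\nmid n$, $4\mid(n-1)$, $4\mid(n+1)$, and it produces the polynomial parts $\tfrac{n(n+2)}{8}$, $\tfrac{n^2}{16}$, $\tfrac{n^2-4}{16}$, $\tfrac{5(n-1)(n+3)}{16}$, $\tfrac{(n+1)^2}{16}$ displayed in the statement.

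The only genuine difficulty is the bookkeeping in this final summation: keeping the parity subsums separate for odd $n$, tracking the $\pm1$ adjustments when the reindexed floor-sums are stretched to match the index ranges in the statement, and checking the few small $n$ for which the interior range degenerates or the concavity estimate is tight. No idea beyond Observation~\ref{newww}, Lemma~\ref{lem:boundonx1} and Corollary~\ref{Least k} is required.
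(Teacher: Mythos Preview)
Your approach is essentially the same as the paper's: both fix $x$, split the admissible $k$ into the ``long'' range $\lfloor x/2\rfloor<k\le x$ handled by Observation~\ref{newww}(\ref{k half}) and the ``short'' range $\bigl[\lfloor n/(n-x+1)\rfloor,\lfloor x/2\rfloor\bigr]$ handled by Corollary~\ref{Least k}, and then evaluate the resulting sums according to the residue of $n$ modulo~$4$. Your write-up is more explicit about why the short interval is nonempty and about the reindexing of the floor sums, but the structure and ingredients match the paper's proof.
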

\begin{proof}
Assume that $n$ is even. Then from part ~\ref{n even} of the Observation \ref{newww}, $x$ is even. From 
Corollary~\ref{Least k} and from part ~\ref{k half} of the Observation \ref{newww}, if  $4 | n,$ then the number of pairs $(x,k)$ such that 
\^F$_{n}(x,k) >0$ is given by

$$1+\sum\limits_{i=0}^{\frac{n}{2}-1}(i+1)-\sum\limits_{i=0}^{\frac{n}{2}-1}
\left\lfloor \frac{n}{n-2i+1}\right\rfloor+\{2(1+2+\ldots+\frac{n}{4}-1)+\frac{n}{4}\}.$$
Equivalently, if  $4 \nmid n,$ then,
$$1+\sum\limits_{i=0}^{\frac{n}{2}-1}(i+1)-\sum\limits_{i=0}^{\frac{n}{2}-1}
\left\lfloor \frac{n}{n-2i+1}\right\rfloor+\{2(1+2+\ldots+\frac{n-2}{4})\}.$$
Hence, the result follows whenever $n$ is even.

If  $n$ is odd, then $x$ can be even or odd. If $x$ is even, then from 
 Corollary~\ref{Least k} and from part ~\ref{k half} of the Observation \ref{newww}, the number of pairs $(x,k)$ such that 
\^F$_{n}(x,k) >0$ is given by
\begin{equation}
1+\sum\limits_{i=0}^{\frac{n-1}{2}}(i+1)-\sum\limits_{i=0}^{\frac{n-1}{2}}
\left\lfloor \frac{n}{n-2i+1}\right\rfloor.
\end{equation}
Suppose that $x$ is odd and $N_n(x,k)$ denote the then number of pairs $(x,k)$ such that \^F$_{n}(x,k) >0.$ If $4 | n-1,$ then $N_n(x,k)$ is given by  
is 
given by
\begin{equation}
N_n(x,k)=1+\sum\limits_{i=1}^{\frac{n-3}{2}}(i+1)-\sum\limits_{i=1}^{\frac{n-3}{2}}
\left\lfloor \frac{n}{n-2i}\right\rfloor+\{2(1+2+\ldots+\frac{n-1}{4})\}.
\end{equation}
If $4 \mid (n+1),$ then 
\begin{equation}
N_n(x,k)=1+\sum\limits_{i=1}^{\frac{n-3}{2}}(i+1)-\sum\limits_{i=1}^{\frac{n-3}{2}}
\left\lfloor \frac{n}{n-2i}\right\rfloor+\{2(1+2+\ldots+\frac{n-3}{4})+\frac{n+1}{4}\}.
\end{equation} 
Hence, the result for the case $4 | (n-1)$ follows by   adding Equation (4) and Equation (5). By adding Equation (4) and Equation (6) we get the result for the case $4 | (n+1)$.\\  
\end{proof}
\subsection{Formula for \^F$_n(x,k)$}\label{sec:PFormula}
In this section, we find a recurrence formula for $\hat F_n(x,k)$ whenever 
$2 \leq x \leq n-2$ and corresponding to each $x$, $\left\lfloor \frac{n}{n-x+1} 
\right\rfloor \leq k \leq \lfloor \frac{x}{2} \rfloor.$
\begin{theorem}\label{thm:main:recurrence1}
Let $n \geq 4$,  $x\in \{y | 2 \leq y \leq n-2, 2|n+y\}$ and 
$k\in  \{\lfloor \frac{n}{n-x+1}\rfloor,\dots, \lfloor \frac{x}{2} \rfloor\}.$ Then 
$$\hat F_{n}(x,k) = 
 \begin{cases}  
 \sum\limits_{i=0}^{\lfloor \frac{k-2}{2} \rfloor} F_{\lfloor 
\frac{n}{2}\rfloor 
-i-1}(\lfloor \frac{x}{2}\rfloor-i,k)+\sum\limits_{j=0}^{k} 
F_{\frac{n-k}{2}-1}(\frac{x-k}{2},j) & \mbox{when $2|k+n$,}\\
\sum\limits_{i=0}^{\lfloor \frac{k-1}{2}\rfloor} F_{\lfloor 
\frac{n}{2}\rfloor-i-1}(\lfloor\frac{x}{2}\rfloor-i,k) & \mbox{when $2\nmid k+n$.}
               \end{cases}$$
\end{theorem}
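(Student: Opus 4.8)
The plan is to exploit the palindromic structure by ``folding'' each $X \in \hat B_n^{x,k}$ onto its length-$\lfloor n/2\rfloor$ prefix and then classifying by the length of the central run of zeros. Write $m = \lfloor n/2 \rfloor$. Two preliminary reductions come from the hypotheses. Since $2 \mid n+x$, each $X \in \hat B_n^{x,k}$ satisfies $x \equiv n \pmod 2$; and when $n$ is odd, counting zeros on the two sides of the centre shows the middle bit of $X$ must be $0$ (otherwise $x$ would be even). Letting $w = w(X)$ be the length-$m$ prefix of $X$ and $\tilde w$ its reverse, we then have $X = w\tilde w$ when $n = 2m$ and $X = w\,0\,\tilde w$ when $n = 2m+1$, and in either case $w$ is an arbitrary binary string of length $m$ with $\lfloor x/2 \rfloor$ zeros; the map $X \mapsto w$ is a bijection from $\hat B_n^{x,k}$ onto the set of such $w$ for which the longest run of zeros of $X$ equals $k$.

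The structural heart of the argument is the identity
$$\bigl(\text{longest run of zeros of }X\bigr) = \max\bigl(L^\circ(w),\, c(w)\bigr),$$
where $\tau(w)$ is the length of the trailing run of zeros of $w$ (so $\tau(w)=0$ if $w$ ends in $1$), $L^\circ(w)$ is the longest run of zeros of $w$ not meeting the last coordinate -- equivalently $L(w')$, with $w'$ obtained from $w$ by deleting its trailing zeros -- and the ``central run'' has length $c(w) = 2\tau(w)$ if $n$ is even and $c(w) = 2\tau(w)+1$ if $n$ is odd. I would prove this by observing that folding carries each run of $w$ disjoint from coordinate $m$ to its mirror image unchanged, while the trailing run of $w$ together with its mirror image and, when $n$ is odd, the extra middle $0$, merge into a single new central run of length $c(w)$ (empty exactly when $n$ is even and $w$ ends in $1$), and nothing else is created; the hypothesis $x \le n-2$ guarantees $w \ne 0^m$, so $w'$ is a genuine nonempty string ending in $1$.

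With this in hand I would split $\hat B_n^{x,k}$ according to $t := \tau(w)$. For $t \ge 0$ write $w = u\,0^t$ with $u$ a nonempty string ending in $1$, of length $m-t$ and with $\lfloor x/2\rfloor - t$ zeros, so that $L^\circ(w) = L(u)$. There are three cases. If $c(w) > k$ then $X \notin \hat B_n^{x,k}$, so such $t$ contribute $0$. If $c(w) = k$ -- possible only when $k \equiv n \pmod 2$, and then $t$ is forced to be $k/2$ ($n$ even) or $(k-1)/2$ ($n$ odd) -- one needs $L(u) \le k$; deleting the terminal $1$ of $u$, the number of admissible $u$ is $\sum_{j=0}^{k} F_{m-t-1}\bigl(\lfloor x/2\rfloor - t,\, j\bigr)$, which after substituting the forced value of $t$ simplifies to $\sum_{j=0}^{k} F_{(n-k)/2 - 1}\bigl((x-k)/2,\, j\bigr)$, the second sum in the statement. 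If $c(w) < k$ one needs $L(u) = k$ exactly; the values of $t$ with $c(w) < k$ are $0 \le t \le \lfloor \frac{k-2}{2} \rfloor$ when $k \equiv n \pmod 2$ and $0 \le t \le \lfloor \frac{k-1}{2} \rfloor$ when $k \not\equiv n \pmod 2$, and for each such $t$ deleting the terminal $1$ of $u$ gives $F_{m-t-1}\bigl(\lfloor x/2\rfloor - t,\, k\bigr)$ admissible strings, the first sum. Adding the contributions yields the claimed formula, with the second sum absent precisely when $2 \nmid k+n$.

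The remaining work is routine: the parity and index bookkeeping (checking $m - t - 1 = (n-k)/2 - 1$ and $\lfloor x/2\rfloor - t = (x-k)/2$ when $c(w)=k$, and that the displayed floor expressions are the exact cut-offs for $c(w) < k$ in each of the four parity sub-cases), together with the remark that a term with fewer than $k$ remaining zeros is automatically $0$ by the convention $F_a(b,c)=0$ for $b<c$, so no extra case distinctions are needed. The one genuinely delicate step is the central-run identity: once one is certain that folding creates exactly one new run, of length $2\tau(w)$ or $2\tau(w)+1$, and alters nothing else, the three-way comparison of $c(w)$ with $k$ -- together with the fact that the middle bit is forced to $0$ when $n$ is odd -- produces the two sums and their parity-dependent presence.
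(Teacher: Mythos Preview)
Your argument is correct and follows essentially the same route as the paper's proof: both fold the palindrome onto its half $w$ of length $\lfloor n/2\rfloor$, observe (for $n$ odd, using $2\mid n+x$) that the middle bit is forced to $0$, and then classify according to the length of the central run created at the fold---equivalently, by the number of trailing zeros of $w$. The paper phrases this as ``$w$ (or its reverse) begins with $Y^i=0^i1$'' and splits on the parity of $n$ and $k$, whereas you package the same case analysis via the identity $L(X)=\max\bigl(L^\circ(w),c(w)\bigr)$ and the trichotomy $c(w)\lessgtr k$; the resulting terms and index ranges coincide.
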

\begin{proof}
Suppose $n=2m$, then all palindromic binary strings of length $n$ are given by 
the concatenation of a binary string of length $m$ and its mirror image. 
For $n=2m+1$, palindromic strings are constructed by concatenation 
of a binary string of length $m$ with its mirror image keeping a central entry either $0$ or $1$.
That is 
\begin{description}
\item[$n$ is even] 
\begin{tabular}{|l|l|l||l|l|l|l|l|}
\hline
1 & 0 & 0 & 0  & 0 & 1\\
\hline
\end{tabular}
\item[$n$ is odd] 
\begin{tabular}{|l|l|l||l||l|l|l|l|l|}
\hline
1 & 0 & 0 & 0/1 &0  & 0 & 1\\
\hline
\end{tabular}
\end{description}
Thus, all palindromic binary strings are constructed 
from the elements from $B_{\lfloor \frac{n}{2} \rfloor}(\lfloor\frac{x}{2} 
\rfloor,j)$, with a condition that the length of  resulting central subword consisting 
entirely of $0$'s does not exceed $k$. If it is $k$ then 
we will choose elements from $B_{\lfloor \frac{n-k+2}{2} 
\rfloor}(\lfloor\frac{x-k}{2} \rfloor,j)$, for $0 \leq j\leq k$. We now examine 
 the construction more closely  for $n$ is even and odd, respectively.
\begin{description}
\item[n is even] Suppose $n=2m$, then $x$ is even. Suppose 
$Y^{i}=\underbrace{00\cdots 0}_{i\;times}1$, a binary string of length $i+1$. 
For 
$k$ is even, elements of   \^B$_{n}^{x,k}$ can be constructed from 
$B_{m}(\frac{x}{2},k)$ which contains $Y^{i}$ as a substring at the beginning  for  
$0 \leq i \leq  \frac{k-2}{2} $ and from $B_{m}(\frac{x}{2},j)$, for $0 \leq j 
\leq k$,  which contains $Y^{\lfloor \frac{k}{2}\rfloor}$ as a initial subword.  When $k$ is 
odd then elements will be taken from 
$B_{m}(\frac{x}{2},k)$ which consist $Y^{i}$ as a substring at starting for 
each 
$0 \leq i \leq  \frac{k-1}{2} $. Thus if $n$ is even, then
$$\hat F_{n}(x,k) = 
 \begin{cases}  
 \sum\limits_{i=0}^{\frac{k-2}{2}} 
F_{\frac{n}{2}-i-1}(\frac{x}{2}-i,k)+\sum\limits_{j=0}^{k} 
F_{\frac{n-k}{2}-1}(\frac{x-k}{2},j) & \mbox{when $k$ is even,}\\
\sum\limits_{i=0}^{\frac{k-1}{2}} F_{\frac{n}{2}-i-1}(\frac{x}{2}-i,k) & 
\mbox{when $k$ is odd.}
              \end{cases}$$
\item[n is odd] Suppose $n=2m+1$ and $x$ is odd, then the central entry is $0$.  
Suppose $k$ is even, then the elements in $\hat B_{n}^{x,k}$ will be taken 
from $B_{m}(\frac{x-1}{2},k)$ which contains $Y^{i}$ as a subword at starting 
for each $0 \leq i \leq  \frac{k-2}{2} $.\\
When $k$ is odd, then the elements in $\hat B_{n}^{x,k}$ will be taken from 
$B_{m}(\frac{x-1}{2},k)$ which contains $Y^{i}$ as a substring at starting for 
each $0 \leq i \leq  \frac{k-3}{2} $ and from 
$B_{\frac{n-k-2}{2}}(\frac{x-k}{2},j)$, for $0 \leq j \leq k$. Hence,\\
$$\hat F_{n}(x,k) =
\begin{cases}
         
\sum\limits_{i=0}^{\frac{k-2}{2}}F_{\frac{n-1}{2}-i-1}(\frac{x-1}{2}-i,k)       
 
 & \mbox{when $k$ is even,}\\
\sum\limits_{i=0}^{\frac{k-3}{2}} 
F_{\frac{n-1}{2}-i-1}(\frac{x-1}{2}-i,k)+\sum\limits_{j=0}^{k} 
F_{\frac{n-k}{2}-1}(\frac{x-k}{2},j) & \mbox{when $k$ is odd.}
               \end{cases}$$
\end{description}
Hence, the result.
\end{proof}
\begin{note}
For $n$ is odd and $x$ is even,
$\hat F_{n}(x,k) = F_{\frac{n-1}{2}}(\frac{x}{2},k).$
\end{note}
\begin{proof}
The result follows from the fact that central entry never is $0$.
\end{proof}
Now we will give a direct proof for Equation~(\ref{eq:an=Fnxk}). \\
\begin{enumerate}
\item 
Let $f_n=1+\sum\limits_{x=1}^n \hat F_{2n-1}(x,1).$  Then $f_1=1+\hat 
F_1(1,1)=2$ and 
$f_2=1+\hat F_3(1,1)+\hat F_2(2,1)=3.$ We will prove result for $n>2$ using 
Theorem~\ref{thm:main:recurrence1}. 
\begin{eqnarray*}
 f_n  &=& 2+\sum\limits_{i=1}^{\lfloor\frac{n}{2}\rfloor} 
\hat F_{2n-1}(2i,1)+\sum\limits_{i=1}^{\lfloor\frac{n-1}{2}\rfloor} 
\hat F_{2n-1}(2i+1,1)\\
&=& 2+\sum\limits_{i=1}^{\lfloor\frac{n}{2}\rfloor} 
 F_{n-1}(i,1)+\sum\limits_{i=1}^{\lfloor\frac{n-1}{2}\rfloor} 
F_{n-2}(i,1)= a_{n-1}+a_{n-2}=a_{n},\; \mbox{where}\; a_{n}=\sum\limits_{i=1}^{\lfloor\frac{n+1}{2}\rfloor} 
 F_{n}(i,1).
\end{eqnarray*}
\item
For $n \geq 2$ let $f'_{n-1}=1+\sum\limits_{i=1}^{\lfloor\frac{2n+1}{4}\rfloor} 
\hat F_{2n}(2i,1).$  Then $f'_1=1+\hat F_4(2,1)=2$ and 
$f'_2=1+\hat F_6(2,1)=3.$ We will prove result for $n>3$ using 
Theorem~\ref{thm:main:recurrence1}. 
\begin{eqnarray*}
 f'_{n-1}  &=& 1+\sum\limits_{i=1}^{\lfloor\frac{2n+1}{4}\rfloor} 
\hat F_{2n}(2i,1)= 1+\sum\limits_{i=1}^{\lfloor\frac{n}{2}\rfloor} 
 F_{n-1}(i,1)= a_{n-1}.
\end{eqnarray*}
\end{enumerate}
We now express  few of the above identities in terms of matrix equations by 
constructing a matrix using the numbers $\hat F_n(x,k).$ 
For a given $n$, $\hat F_n$ denote an $(n+1) \times (n+1)$ matrix and is 
defined 
as 
$
\hat F_n=[a_{xk}]$,
where $a_{xk}=\hat F_n(x,k)$ for $0\le x,k\le n.$
For example, $$\hat F_1=\begin{bmatrix}
                   1 & 0\\
                   0 &1
                  \end{bmatrix}, \hat F_2=\begin{bmatrix}
                  1& 0& 0,\\
                  0& 0& 0\\
                  0& 0& 1
                  \end{bmatrix}, \hat F_3=\begin{bmatrix}
                  1 & 0& 0& 0\\
                  0& 1& 0& 0\\
                 0& 1& 0& 0\\
       0& 0& 0& 1
                  \end{bmatrix}, \hat F_4=\begin{bmatrix}
                  1& 0& 0& 0& 0\\
         0& 0& 0& 0& 0\\
        0& 1& 1& 0& 0\\
        0& 0& 0& 0& 0\\
        0& 0& 0& 0& 1\\
        \end{bmatrix}.$$
Let $e_{n}$ denote the column vector in $\R^n$ with each entry 1.  Then the following hold.
\begin{enumerate}
 \item From Equation (\ref{eq:Bn=Bnxk1}), we have $e_{n+1}^T \hat F_n 
e_{n+1}=2^{\lfloor \frac{n+1}{2}\rfloor}.$
 \item From Equation (\ref{eq:ncx1}), we have 
 $\hat F_{2n} e_{2n+1}=\left[\binom{n}{0}, 0, \binom{n}{1}, 0, \ldots, 0, 
\binom{n}{n}\right]^T.$
The non-zero entries in right hand side forms the $n^{th}$ row of the Pascal triangle. 
\item Also from Equation (\ref{eq:ncx1}), we have 
 $\hat F_{2n+1} e_{2n+2}=\left[\binom{n}{0}, \binom{n}{0}, \binom{n}{1}, 
\binom{n}{1}, \ldots, 
\binom{n}{n}, \binom{n}{n}\right]^T.$
 \item The row vector $e_{n+1}^T \hat F_n$ provides column sums of $\hat F_n.$ 
 Since the first column of $\hat F_n$ is the vector $[1,0,\ldots,0]^T$ hence, the sequence
 of first column sums of $\hat F_n$ for $n=1,2,\dots$ is  
 $((e_{2n+2}^T\hat F_{2n+1})_1)_{n=1}^\infty=(1,1,1,1,\ldots).$ From 
Equation (\ref{eq:an=Fnxk}), we have 
 $$((e_{n+1}^T\hat 
F_n)_2)_{n=1}^\infty=(1,1,2,2,4,4,7,7,12,12,20,20,33,33,54,54,88,88,\ldots).$$
   \item From Theorem~\ref{thm: |S_pn|}, the number of nonzero entries in the 
matrix $\hat F_n$ is equal to
 $|\hat S_n|.$  
 \end{enumerate}
 The eigenvalues of $\hat F_n$ are $1$ and $0$  only.  The trace and 
determinant of $\hat F_n$ 
 are respectively  $1+\lfloor \frac{n+1}{2} \rfloor$ and 0.
  The matrix $\hat F_n$ in general, is not diagonalizable. For example, $\hat 
F_{5}$ is not diagonalizable whereas $\hat F_{4}$ is diagonalizable. \\
 
\subsection{Applications}\label{sec:PApplications}
\subsubsection{Number of compositions and partitions  of $n+1$}
A composition of a positive integer $n$ is a sequence $k_1,k_2,\ldots, k_r$ of 
positive 
integers called 
summands of the composition such that $n=k_1+k_2+\dots+k_r.$ A palindromic 
composition(or palindrome) is one for which the sequence reads same forwards 
and 
backwards. It is well known 
(see~\cite{H-C-G}) that 
the number of palindromes of $n$ is $2^{\lfloor \frac{n+1}{2}\rfloor}$. There 
is 
a one to one 
correspondence between 
compositions of $n$ and binary sequences of length $n-1$ (refer 
Section~5 in ~\cite{GR2}).

Few interesting applications of column sums of $\hat F_n$ are:

\begin{enumerate}
 \item the $k^{th}$ column 
sum of $\hat F_n$ is the 
 number of palindromic compositions  of $n+1$  with maximum summand $k+1$. 
Consequently, the total number of palindromic compositions of $n+1$ is 
$e_{n+1}^T \hat F_n e_{n+1}=2^{\lfloor \frac{n+1}{2}\rfloor}.$ 
\item  Further, from Equation~ (\ref{eq:an=Fnxk}),  the number of palindromic 
compositions of $2n$  with ones and twos is 
 $(e_{2n}^T \hat F_{2n-1})_1+(e_{2n}^T \hat F_{2n-1})_2=1+(e_{2n}^T \hat 
F_{2n-1})_2=a_{n}.$ And the number of palindromic compositions of $2n+1$ 
with ones and twos is 
 $(e_{2n+1}^T \hat F_{2n})_1+(e_{2n+1}^T \hat F_{2n})_2=1+(e_{2n+1}^T \hat 
F_{2n})_2=a_{n-1}.$ The same is 
derived by ~Krishnaswami Alladi and V.~E.~Hoggatt, Jr in ~\cite{K-H}.
\item
Also the total number of 1's, 2's, positive signs in all palindromic compositions 
of 
$n$ with summand 1 and 2 can be written in terms of $\hat F_{n-1}(x,k)$.\\
First, we will find out the total number of 2's. It is given by 
$\sum\limits_{x=1}^{n-1}x\;\hat F_{n-1}(x,1)$. We will denote it as 
$\mathcal{A}_{2}(n)$.
\begin{eqnarray*}
\mathcal{A}_{2}(2n+1)&=& \sum\limits_{x=1}^{n}x \; \hat F_{2n}(x,1)=2\;\sum\limits_{i=1}^{\lfloor\frac{n}{2}\rfloor} 
i\;\hat 
F_{2n}(2i,1)=2\;f_{2}(n)\\
                     &=& \mathcal{A}_{2}(2n-1)+\mathcal{A}_{2}(2n-3)+2\;\pi(2n-3),
\end{eqnarray*} 
where $f_{2}(n)$ and $\pi(n)$ have been described in ~\cite{K-H}.
\begin{eqnarray*}
\mathcal{A}_{2}(2n)&=& \sum\limits_{x=1}^{n}x \; \hat F_{2n-1}(x,1)=1+2\;\sum\limits_{i=1}^{\lfloor\frac{n}{2}\rfloor} 
i\;\hat F_{n-1}(i,1)+2\;\sum\limits_{i=1}^{\lfloor\frac{n}{2}\rfloor} i\;\hat 
F_{n-2}(i,1)+2\;\sum\limits_{i=1}^{\lfloor\frac{n}{2}\rfloor} \;\hat 
F_{n-2}(i,1)\\
                     &=&2\;f_{2}(n)+2\;f_{2}(n-1)+a_{n-2}= 
\mathcal{A}_{2}(2n-2)+\mathcal{A}_{2}(2n-4)+2\;\pi(2n-4).
\end{eqnarray*} 
The same is 
derived by ~Krishnaswami Alladi and V.~E.~Hoggatt, Jr in ~\cite{K-H}.\\
Similarly the total number of 1's and positive signs in all palindromic 
compositions 
of $n$ with summand 1 and 2 can easily be expressed in terms of $\hat 
F_{n-1}(x,1).$
\end{enumerate}
 S.~Heubach et al  ~\cite{H-C-G} counted the number of ``+" signs and number 
summands in all palindromic
 compositions  of $n$. These results can be  expressed in terms of $\hat 
F_n(x,k)$ in the 
following manner.
 \begin{enumerate}
  \item It is clear that the number of ``+" signs in a 
composition of $n$ is the same as the number of $1$'s in the corresponding binary 
string of length $n-1$. Hence, the total number of ``+" signs in all palindromic 
compositions 
of $n$  is  
$$\sum\limits_{k=0}^{x}\sum\limits_{x=0}^{n-1} x \; 
\hat F_{n-1}(x,k)=\sum\limits_{x=1}^{n-1}x \; \binom{n-1}{x}=(n-1)2^{n-2}.$$\\
If $n$ is odd, then
$$\sum\limits_{k=0}^{x}\sum\limits_{x=0}^{n-1} x \; 
\hat F_{n-1}(x,k)=2\;\sum\limits_{x=1}^{\lfloor \frac{n-1}{2}\rfloor} x \; 
\binom{\frac{n-1}{2}}{x}=\frac{n-1}{2}\;2^{\frac{n-1}{2}}.$$
If $n$ is even, then
$$\sum\limits_{k=0}^{x}\sum\limits_{x=0}^{n-1} x \; 
\hat F_{n-1}(x,k)=4\;\sum\limits_{x=1}^{\lfloor \frac{n-2}{2}\rfloor} x \; 
\binom{\frac{n-2}{2}}{x}\;+\;\sum\limits_{x=0}^{\lfloor \frac{n-2}{2}\rfloor}  
\; \binom{\frac{n-2}{2}}{x}=(n-1)\;2^{\frac{n}{2}-1}.$$
\item The number of summands in a composition is one more than the number of 
``+'' 
signs, hence, the number of summands in all compositions of $n$ is given by 
$$\begin{cases}
(n-1)2^{\frac{n}{2}-1}+\sum\limits_{k=1}^{n-1}\sum\limits_{x=1}^{n-1}\hat 
F_{n-1}(x,k)+\hat F_{n-1
}(0,0)=(n+1)2^{\frac{n}{2}-1} & \mbox{when $n$ is even,}\\
\frac{n-1}{2}2^{\frac{n-1}{2}}+\sum\limits_{k=1}^{n-1}\sum\limits_{x=1}^{n-1}
\hat F_{n-1}(x,k)+\hat F_{n-1
}(0,0)=\frac{n+1}{2}2^{\frac{n-1}{2}} & \mbox{when $n$ is odd.}
               \end{cases}$$
 \end{enumerate}
Let $\hat P_n$ denote the number of palindromic partitions of positive integer 
$n.$ Then it is easy to see that 
\begin{equation}\label{1eq: P_{n+1}{x,k}}
\hat P_{n+1}=\sum\limits_{(x,k)\in \hat S_{n}}\hat P_{n+1}(x,k),
\end{equation}
where $\hat P_{n+1}(x,k)$ denotes the number of those partitions of $n+1$ whose 
corresponding binary strings belongs to  $\hat B_{n}^{x,k}$.
 From Equation~ (\ref{1eq: P_{n+1}{x,k}}), we have $|\hat S_n|\le \hat 
P_{n+1}.$ Further, it is easy 
 to check that $|
 \hat S_n|=\hat P_{n+1}$ holds only when $n\le 5.$ 
 Hence, in order to find $\hat P_{n+1}$  it is sufficient to find $\hat 
P_{n+1}(x,k)$ for all $x,k$ with 
 $0\leq k \leq x \leq n.$ 
Clearly $\hat P_{n+1}(x,k)>0 \Leftrightarrow \hat B_{n}^{x,k} \neq 
\emptyset.$

The Equation~(\ref{1eq: P_{n+1}{x,k}}) can be derived by defining 
a  relation on $\hat B_{n}^{x,k}$ as $X\sim 
Y$ if  the lengths  of subwords of zeros are the
same in both $X$ and $Y$. Clearly, this is an equivalence relation and  $\hat 
P_{n+1}(x,k)$ is equal to 
number of equivalence classes of this equivalence relation.
Equivalently, two elements $X,Y \in \hat B_{n}^{x,k}$ correspond  to distinct 
partitions of 
$n+1$ if and only if  there exists a positive integer $ l>0 $ such that the number 
of subwords of zeros of 
length 
$l$ in $X$ and $Y$ are distinct.\\
For  $X\in \hat B_n^{x,k},$ we denote $X_l$ as the multiset of lengths of all 
subwords of zeros in $X$.
Then $X\sim Y$ if and only if $X_l=Y_l$. Hence, we can define $[X]_l=X_l.$
If $[X]_l=\{l_1,l_2,\ldots,l_t\},$ then there is a bijection between $[X]$ and 
set of all palindromic
compositions of $n+1$  having  summands $l_{1}+1,l_{2}+1,\ldots,l_{t}+1$ and 
$n-\sum\limits_{i=1}^{t}(l_{i}+1)+1$ $1$'s.

\begin{ex}
\begin{enumerate}
\item
For $n=6$, $x=4$, $k=2$ we have $\hat B_{6}^{4,2}=[X]\cup[Y],$
where $X=001100$, $Y=010010$ and
$[X]=\{001100\}\;\;[Y]=\{010010\}.$
Hence, $\hat P_{7}(4,2)=2.$
\item
 For $n=15$, $x=9$,  $k=3$ we have $|\hat B_{15}(9,3)|=10$ and 
$\hat B_{15}^{9,3}=[X]\cup[Y]\cup[Z]$, 
where $X=000111000111000$, $Y=010011000110010$, $Z=010101000101010 $ and 
\begin{eqnarray}
[X]=\{000111000111000, 100011000110001 ,110001000100011\} \nonumber
\end{eqnarray}
\begin{eqnarray}
[Y]=\{010011000110010, 001011000110100, 001101000101100\nonumber
\\
 011001000100110, 101001000100101, 100101000101001 \} \nonumber
\end{eqnarray}
$[Z]=\{010101000101010 \}.$
Hence, $\hat P_{15}(9,3)=3.$
\end{enumerate}
\end{ex}
It is easy to see that the following values of $\hat P_{n+1}(x,k)$
 follows immediately from its definition whenever $\hat F_{n}(x,k) > 0$. 
 \begin{enumerate}
\item  $\hat P_{n+1}(x,x)= \hat P_{n+1}(x,1)=1.$ And $\hat 
P_{n+1}(n-1,\frac{n-1}{2})=1$. 
\item For an odd integer $2n+1$, if $x$ is even and $n =  \frac{x}{2} +\; 
\lfloor \frac{ \frac{x}{2}+1}{2} \rfloor
+i-1,$ for nonnegative integer $i$, then 
 $\hat P_{2n+2}(x,2) = 
 \begin{cases}
             i+1   & \mbox{if $0 \leq i < \lfloor \frac{x}{4} \rfloor -1$,}\\

\lfloor \frac{x}{4} \rfloor & \mbox{if $i \geq \lfloor \frac{x}{4} \rfloor -1 
$.}
               \end{cases}$
\begin{proof}
For $x$ is even $\hat P_{2n+2}(x,2)$ is given by all possible partitions of 
$n+1$ whose corresponding binary strings are in $B_{n}^{ \frac{x}{2},2}$. Let 
$X\in B_{n}^{ \frac{x}{2},2}$ and $n_{00}$ denotes the number of times the 
string
$00$ occurs as a subword  in $X$. Then $ n_{00} \le \lfloor \frac{x}{4} 
\rfloor$ 
and 
the minimum value of 
$n_{00}$ is either $\lfloor \frac{x}{4} \rfloor-i$ or $1$
depending on   $ i < \lfloor 
\frac{x}{4} \rfloor -1$ or $i \geq \lfloor \frac{x}{4} \rfloor -1$ respectively.
Hence, the result follows.
\end{proof}

Similarly one can see if $x$ is odd and $n= \lfloor \frac{x}{2} \rfloor+\; 
\lfloor \frac{x+1}{4} 
\rfloor+i$, for nonnegative integer $i$, then
 $\hat P_{2n+2}(x,2) = 
 \begin{cases}
             i+1   & \mbox{if $0 \leq i < \lfloor \frac{x-1}{4} \rfloor -1$,}\\

\lfloor \frac{x-1}{4} \rfloor & \mbox{if $i \geq \lfloor \frac{x-1}{4} \rfloor 
-1 
$.}
               \end{cases}$           
               
\item For an even integer $2n$, if $\frac{x}{2}$ is even and $n =  \frac{3x}{4} 
+\; i$, for nonnegative integer $i$, then \\
 $\hat P_{2n+1}(x,2) = 
 \begin{cases}
             i+1   & \mbox{if $0 \leq i < \lfloor \frac{x}{4} \rfloor -1$,}\\

\lfloor \frac{x}{4} \rfloor & \mbox{if $i \geq \lfloor \frac{x}{4} \rfloor -1 
$.}
               \end{cases}$
\begin{proof}
For $\frac{x}{2}$ is even, $\hat P_{2n+2}(x,2)$ is given by all possible 
partitions of $n$ whose corresponding binary strings are in $B_{n-1}^{ 
\frac{x}{2},2}$. Let $X\in B_{n-1}^{\frac{x}{2},2}$ and $n_{00}$ denotes the 
number of times the string
$00$ occurs as a subword  in $X$. Then $ n_{00} \le \lfloor \frac{x}{4} 
\rfloor$ 
and 
the minimum value of 
$n_{00}$ is either $\lfloor \frac{x}{4} \rfloor-i$ or $1$
depending on   $ i < \lfloor 
\frac{x}{4} \rfloor -1$ or $i \geq \lfloor \frac{x}{4} \rfloor -1$ respectively.
Hence, the result follows.
\end{proof}
Similarly, if $\frac{x}{2}$ is odd and $n =  \frac{3x-2}{4} +\; i$, for 
non-negative integer 
$i$, then $\hat P_{2n+1}(x,2) = 
 \begin{cases}
             i+1   & \mbox{if $0 \leq i < \lfloor \frac{x-2}{4} \rfloor -1$,}\\

\lfloor \frac{x-2}{4} \rfloor & \mbox{if $i \geq \lfloor \frac{x-2}{4} \rfloor 
-1 
$.}
               \end{cases}$
\begin{note}
Now the number of palindromic partitions of $n+1$ with summand 1, 2 and 3 can 
easily 
be derived by adding \; $1,\hat P_{n}(x,1), \hat P_{n}(x,2)$ \; for $1 \leq x 
\leq n.$
\end{note}                               
\item If $k|x$ and $n = x+\frac{x}{k}-1$, then $\hat P_{n+1}(x,k) =1.$
\item For an even integer $x$, we have $\hat 
P_{2n+2}(x,k)=P_{\frac{n+1}{2}}\left(\frac{x}{2},k\right),$ recall  where $P_{n}(x,k)$ 
denotes the 
number of  partitions of $n+1$ whose corresponding binary strings belongs 
to  $B_{n}^{x,k}.$
\end{enumerate}
Now for given $n$, $4 \leq x \leq (n-2)$ and $3 \leq k \leq (x-2)$, $\hat 
P_{n}(x,k)$ is given by the following formula.

\begin{theorem}\label{thm:PNXK} Let $n\in \N$. Then
\begin{enumerate}
 \item \label{thm:PNXK:1} $\hat P_{2n+2}(x,k)=\sum\limits_{i=0}^{\left\lfloor\frac{k}{2}\right\rfloor-1}P_{n-i}\left(\frac{
x-2i-1}{2},k\right)+\sum\limits_{j=0}^{k}P_{\frac{2n-k+1}{2}}\left(\frac{x-k}{2}
,j\right),$
where $x$ is odd  and the second part in the right hand side vanishes whenever $k$ is even.
\item \label{thm:PNXK:2} $ \hat P_{2n+1}(x,k)=\sum\limits_{i=0}^{\lfloor\frac{k-1}{2}\rfloor}P_{n-i}\left(\frac{
x-2i}
{2},k\right)+\sum\limits_{j=0}^{k}P_{\frac{2n-k}{2}}\left(\frac{x-k}{2},j\right)
,$
where $x$ is even and the second part in the right hand side  vanishes whenever $k$ is odd.
\end{enumerate}
\end{theorem}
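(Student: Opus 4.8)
By the discussion immediately preceding the statement, $\hat P_{n+1}(x,k)$ is the number of equivalence classes of $\hat B_n^{x,k}$ under the relation ``equal multiset of zero-block lengths'', so the plan is to enumerate these classes directly, in parallel with the proof of Theorem~\ref{thm:main:recurrence1}. First I would fix a normal form: every palindrome of even length $2n$ is uniquely $u\,0^{2a}\,\overline u$, and every palindrome of odd length $2n+1$ whose number of zeros is odd (which forces the central letter to be $0$, since otherwise the zeros would pair off) is uniquely $u\,0^{2a+1}\,\overline u$, where $a\ge 0$, $\overline u$ is the reverse of $u$, and $u$ is a binary word of length $n-a$ ending in $1$; the degenerate all-zero word, with $a=n$, lies outside the range of the theorem. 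In either case the multiset of zero-block lengths of the palindrome is $2M_u\uplus\{c\}$, where $M_u$ is that multiset for $u$, $2M_u$ is the multiset with every multiplicity of $M_u$ doubled, $c=2a$ in the even case or $c=2a+1$ in the odd case, and the term $\{c\}$ is dropped when $c=0$.

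The key step is that $(M_u,a)\mapsto 2M_u\uplus\{c\}$ is injective: when $c\ge 1$ the value $c$ is the unique element of odd multiplicity in the palindrome's multiset, so $a$ is recovered, and deleting one copy of $c$ and halving all multiplicities recovers $M_u$; when $c=0$ (only in the even case, with $a=0$) all multiplicities are even and halving recovers $M_u$. Hence the equivalence classes of $\hat B_n^{x,k}$ biject with the pairs $(M,a)$, $a\ge 0$, such that $M$ is realizable as the zero-block-length multiset of some length-$(n-a)$ binary word ending in $1$ with $\tfrac{x-c}{2}$ zeros, and $\max(c,\max M)=k$. I would then split into two blocks. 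In block~(A) the central block attains the maximum, $c=k$; this pins $a=k/2$ in the even case and $a=\tfrac{k-1}{2}$ in the odd case, so it is empty unless $k$ has the parity asserted in the statement (this is the source of the ``second part vanishes'' clauses), and then $M$ is any partition of $\tfrac{x-k}{2}$ whose largest part $j$ satisfies $j\le k$, contributing $\sum_{j=0}^{k}P_{n-a}\!\left(\tfrac{x-k}{2},j\right)$ with $n-a$ equal to $\tfrac{2n-k}{2}$ or $\tfrac{2n-k+1}{2}$. In block~(B), $c<k$, so $\max M=k$ while $a$ runs over $0\le a\le\lfloor(k-1)/2\rfloor$ in the even case and $0\le a\le\lfloor k/2\rfloor-1$ in the odd case (exactly the nonnegative integers with $c<k$ of the forced parity), contributing $\sum_a P_{n-a}\!\left(\tfrac{x-c}{2},k\right)$; substituting $c$ and renaming $a$ as $i$ turns the two blocks into the two displayed sums, and they are disjoint because they force different values of $a$.

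The point that needs real care---and the step I expect to be the main obstacle---is matching ``realizable by a word ending in $1$'' with what $P_m(y,j)$ actually counts, namely realizability by an arbitrary length-$m$ word with $y$ zeros. A multiset with $r$ parts is realizable by such a word ending in $1$ iff $r\le m-y$, and by an arbitrary such word iff $r\le m-y+1$, so I must check that every multiset occurring above already satisfies $r\le m-y$. Since each part is $\ge 1$ we have $r\le\sum M=y$, and in all cases that arise $m-y$ equals $n-\tfrac{x}{2}$ (even length) or $n-\tfrac{x-1}{2}$ (odd length), which is $\ge y$ precisely because $x\le n$; hence $r\le y\le m-y$ and the two notions of realizability coincide, so each block really does evaluate to the corresponding sum of $P$-values. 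Everything else is bookkeeping already implicit in the normal form: verifying that the zero counts $\tfrac{x-2i-1}{2}$, $\tfrac{x-2i}{2}$ and the word lengths $\tfrac{2n-k+1}{2}$, $\tfrac{2n-k}{2}$, $n-i$ come out as written, and that $j$ indeed ranges over $0,\dots,k$ in block~(A) because no zero-block of $u$ can be longer than the central block of length $k$.
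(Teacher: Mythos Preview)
Your decomposition is the same one the paper uses (it phrases it as ``middle summand of size $i+1$ together with a partition on the left and its mirror image on the right'' and defers the no-duplicates claim to Lemma~2 of \cite{G-S}); your write-up is in fact considerably more explicit than the paper's one-sentence sketch, and the injectivity argument via ``unique element of odd multiplicity'' is clean and correct.

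There is, however, a real gap in the last paragraph. By the paper's convention, $P_{m}(y,j)$ counts equivalence classes in $B_{m-1}^{y,j}$, not $B_m^{y,j}$: the definition is that $P_{n+1}(x,k)$ corresponds to binary strings in $B_n^{x,k}$. So ``realizability by an arbitrary length-$m$ word'' is off by one; the correct condition built into $P_m(y,j)$ is $r\le (m-1)-y+1=m-y$, which is \emph{exactly} the condition ``realizable by a length-$m$ word ending in $1$''. With this correction the two notions coincide on the nose and the whole ``main obstacle'' paragraph becomes unnecessary.

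As written, your workaround for that (nonexistent) discrepancy is incorrect: you assert $y\le m-y$ ``precisely because $x\le n$'', but in this theorem the palindrome has length $2n$ or $2n+1$, so $x$ may range up to $2n-2$, and $x\le n$ is simply not available. Concretely, for a length-$2n$ palindrome with $a=0$ one has $m=n$, $y=x/2$, and $m-y=n-x/2$; when $x>n$ the inequality $y\le m-y$ fails, yet the theorem is still meant to apply. So the argument breaks exactly in the regime where the extra multisets you are trying to rule out actually exist. Replace the paragraph by the one-line observation about the index shift in $P_m$ and the proof goes through.
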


\begin{proof} Proof of part~\ref{thm:PNXK:1}:
To create all palindromic partitions of $2n+2$ corresponding to binary strings in  $\hat B_{2n+1}(x,k),$ combine a middle summand of size $i+1$ (with same 
parity as $2n+2$, $0 \leq i \leq \lfloor\frac{k}{2}\rfloor-1)$ with a 
partition of $\frac{2n-i+1}{2}$ on the left and its mirror image on the right. 
The process is reversible and creates no duplicates(see Lemma~2 of ~\cite{G-S}).

Proof of part~\ref{thm:PNXK:2}: Similar to the proof of part~\ref{thm:PNXK:1}.
\end{proof}

\section{Acknowledgement}
We would like to  thank Mr. Prateek Gulati and Mr. Kaustuv Deolal for computing 
various values 
of $F_n(x,k)$ using python.

\begin{thebibliography}{10}

\scriptsize
\bibitem{K-H} ~Krishnaswami Alladi, V.~E.~Hoggatt, Jr., {\it Compositions with ones 
and twos}, The Fibonacci Quarterly, (1975) 233-239.
\bibitem {G-S} ~Phyllis Chinn, Ralph Grimaldi, and Silvia Heubach, {\it The frequency of summands of a particular size in palindromic compositions,} Ars Combinatoria, 69, (2003) 65-78.
\bibitem{GR1} Ralph Grimaldi, {\it Discrete and Combinatorial Mathematics}, Pearson publication, 5th edition, 2003.
\bibitem{GR2} R.~P.~Grimaldi, S.~Heubach, {\it Binary Strings Without Odd 
Runs of zeros}, Ars Combinatoria, 75, (2005) 241-255.
\bibitem{H-C-G} ~S. Heubach, P. Z. Chinn, and R. P. Grimaldi, {\it Rises, levels, drops and ``+'' signs in compositions: extensions of a paper by Alladi and Hoggatt}, The Fibonacci Quarterly, 41(3), (2003) 229-239.
\bibitem{Koshy} Thomas Koshy, {\it Catalan Numbers with applications}, Oxford 
university press, 2009.
\bibitem {Moni:Satya} Monimala Nej,  A. Satyanarayana Reddy, {\it Exponents of primitive symmetric companion matrices}, arXiv:1806.06838v1, 2018.
\bibitem{NY1} M.~A.~Nyblom, {\it Enumerating Binary Strings 
without $r$-Runs of Ones}, International Mathematical Forum, 7(38), (2012) 1865-1876. 
\bibitem{NY2} M.~A.~Nyblom,  {\it Counting Palindromic Binary Strings
Without $r$-Runs of Ones}, Journal of Integer Sequences, 16(8), (2013)
\href{https://cs.uwaterloo.ca/journals/JIS/VOL16/Nyblom/nyblom13.html}{Article 
13.8.7}.
\bibitem{KR} Kenneth H.~Rosen, {\it Discrete Mathematics and Its 
Applications},  William C Brown Publication, 4th edition, 1998.
\bibitem{Sills} A. V. Sills, {\it Compositions, partitions, and Fibonacci numbers}, Fibonacci Quart, 49(4), (2011) 348-354.
\bibitem{AT} Amitabha Tripathi, {\it Six Ways to Count the Number of
Integer Compositions}, Crux Mathematicorum, 39(2), (2013) 84-88.


\end{thebibliography}

\end{document}